\newcommand {\debeq}	{\begin{eqnarray*}}
\newcommand {\fineq}	{\end{eqnarray*}}
\newcommand	{\dtf}	{\bar{d}_f}
\newcommand	{\dtfp}	{\bar{d}_{F_p}}
\newcommand{\tr}{\mathbbm{t}}
\newcommand{\trT}{\tr^{\{T\}}}
\newcommand{\dottrT}{\dot\tr^{\{T\}}}
\newcommand{\mc}{\mathcal{M}}
\newcommand{\R}{\mathbb{R}}
\newcommand{\Z}{\mathbb{Z}}
\newcommand{\Q}{\mathbb{Q}}
\newcommand{\N}{\mathbb{N}}
\newcommand{\Ac}{\mathscr{A}}
\newcommand{\Dc}{\mathscr{D}}
\newcommand{\Ec}{\mathscr{E}}
\newcommand{\Jc}{\mathscr{J}}
\def \Ã©{\'e}
\def \Ã¨{\`{e}}
\def \Ã¹{\`{u}}
\def \Ã®{\^{\i}}
\def \Ã {\`{a}}
\def \Ã§{\c{c}}
\newtheorem	{thm}		{Theorem}[section]
\newtheorem	{dfn}		[thm]{Definition}
\newtheorem	{lem} 	[thm]	{Lemma}
\newtheorem	{prop}	[thm]{Proposition}
\newtheorem     {rem}       [thm]    {Remark}
\newcommand	{\indic}	[1] {{\mathbbm{1}}_{#1}}
\newcommand	{\indicbis}	[1] {\indic{\{#1\}}}
\providecommand{\Fonction}[5]
{\begin{alignat*}{2}
  & #1 :\; & #2 & \longrightarrow #3 \\
  & & #4 & \longmapsto #5
\end{alignat*}}
\definecolor{couleur}{rgb}{0.4,0.8,0.7}
\begin{document}

\title{The comb representation of compact ultrametric spaces}
\author{\textsc{Amaury Lambert$^{1,2,\natural}$, Ger\'onimo Uribe Bravo$^{3,\dagger}$}
}
\date{\today}
\maketitle

\noindent\textsc{$^1$
UPMC Univ Paris 06\\
Laboratoire de Probabilités et Modèles Aléatoires CNRS UMR 7599\\
Paris, France}\\
\noindent\textsc{$^2$
Collège de France\\
Center for Interdisciplinary Research in Biology CNRS UMR 7241\\
Paris, France}\\
\noindent\textsc{$^3$
UNAM\\
Instituto de Matem\'aticas\\
M\'exico DF, Mexico}\\

\noindent $\natural$
\textsc{E-mail: }amaury.lambert@upmc.fr\\
\textsc{URL: }http://www.lpma-paris.fr/pageperso/amaury/index.htm\\
\noindent $\dagger$
\textsc{E-mail: }geronimo@matem.unam.mx\\
\textsc{URL: }http://www.matem.unam.mx/geronimo/
\\

\begin{abstract}
\noindent
We call a \emph{comb} a map $f:I\to [0,\infty)$, where $I$ is a compact interval, such that $\{f\ge \varepsilon\}$ is finite for any $\varepsilon$. A comb induces a (pseudo)-distance $\dtf$ on $\{f=0\}$ defined by $\dtf(s,t) = \max_{(s\wedge t, s\vee t)} f$. We describe the completion $\bar I$ of $\{f=0\}$ for this metric, which is a compact ultrametric space called \emph{comb metric space}. 

Conversely, we prove that any compact, ultrametric space $(U,d)$ without isolated points is isometric to a comb metric space. 
We show various examples of the comb representation of well-known ultrametric spaces: the Kingman coalescent, infinite sequences of a finite alphabet, the $p$-adic field and spheres of locally compact real trees.
In particular, for a rooted, locally compact real tree defined from its contour process $h$, the comb isometric to the sphere of radius $T$ centered at the root can be extracted from $h$ as the depths of its excursions away from $T$.

\end{abstract}  	
\bigskip

\noindent
\textit{Running head.} Comb metric spaces.

\medskip

\noindent {\it MSC 2000 subject classifications:} Primary 05C05; secondary 46A19, 54E45, 54E70.
\medskip

\noindent {\it Key words and phrases:} Real tree, coalescent, mass measure, visibility measure, local time, $p$-adic field, coalescent point process.
\section{Introduction}

An ultrametric space is a metric space $(U,d)$ such that for any $x,y,z\in U$, $d(x,z)\le \max (d(x,y), d(y,z))$. The $p$-adic field is a famous example of ultrametric space. Another example is the set of infinite sequences of elements of a finite alphabet, say $\{0,1,\ldots p-1\}$. This set can also be seen as the boundary of the planar, regular $p$-ary rooted tree. More generally for any rooted tree, a sphere centered at the root, i.e. the set of all points at the same (graph) distance to the root, is ultrametric. Characterizing the metric of spheres of trees is of particular interest in population biology (population genetics, phylogenetics), where the tree is a time-calibrated genetic or phylogenetic tree, the sphere is the set of individuals or species living at the same time and the metric structure of such a sphere is the set of ancestral relationships between these individuals or species. The tree spanned by a sphere (and the root) is sometimes called reduced tree or coalescent tree (and is said ultrametric by abuse).\\

We start with the following observation, proved in Section \ref{sec:finite-ultra} of the appendix.
For any finite subset $S$ of an ultrametric space $(U,d)$, there is at least one labelling of its elements $\{x_i:1\le i\le n\}$ such that  for any $i<j$, 
\begin{equation}
\label{eqn:intro}
d(x_i,x_{j}) = \max\{d(x_k,x_{k+1});i\le k<j\}. 
\end{equation}
With this labelling, the metric structure of $S$ is then completely characterized by the list of $n-1$ distances between pairs of consecutive points. We call this list of distances a \emph{comb}. The goal of the present paper is to extend this representation to all compact ultrametric spaces. 

We will further show that for a rooted, locally compact real tree defined from its contour process $h$, the comb isometric to the sphere of radius $T$ centered at the root is the list of depths of excursions of $h$ away from $T$.\\

In the next section, we define a comb and the ultrametric associated with it. We recall how the Kingman coalescent tree can be embedded in a random comb. We then identify the completion of the comb for this metric, that we call \emph{comb metric space}. 
In Section \ref{sec:converse}, we show that any compact ultrametric space without isolated point is isometric to a comb metric space. We provide such a representation for the two classical examples of ultrametric spaces mentioned in the beginning of this introduction, namely the $p$-adic field and the boundary of the infinite $p$-ary tree. 
Last, Section \ref{sec:spheres} is devoted to the special case of spheres of real trees.


\section{The comb metric}

\subsection{Definition and examples}

Let $I$ be a compact interval and $f:I\to \R_+$ such that for any $\varepsilon >0$, $\{f\ge \varepsilon\}$ is finite. 
For any $s,t\in I$, define $\dtf$ by
$$
\dtf(s,t) = \sup_{(s\wedge t, s\vee t)} f.
$$
It is clear that $\dtf$ is a pseudo-distance on $\{f=0\}$, and more precisely, that it is ultrametric, in the sense that
$$\dtf(r,t)\le \max(\dtf(r,s),\dtf(s,t)) \qquad r,s,t\in I.
$$
It is a distance on $\{f=0\}$ whenever $\{f\not=0\}$ is dense in $I$ for the usual distance. This may not be the case in general, so we need to consider $\dot{I}$ the quotient space $\{f=0\}|_\sim$, where $\sim$ is the equivalence relation 
 $$
 s\sim t \Leftrightarrow \dtf(s,t)=0 \Leftrightarrow f=0 \mbox{ on }[s\wedge t,s\vee t].
 $$

\begin{dfn}
\label{dfn:comb}
We call $f$ a \emph{comb-like function} or \emph{comb}, and $\dtf$ the \emph{comb metric} on $\dot{I}$.
\end{dfn}

\begin{figure}[!ht]
\includegraphics[width=\textwidth]{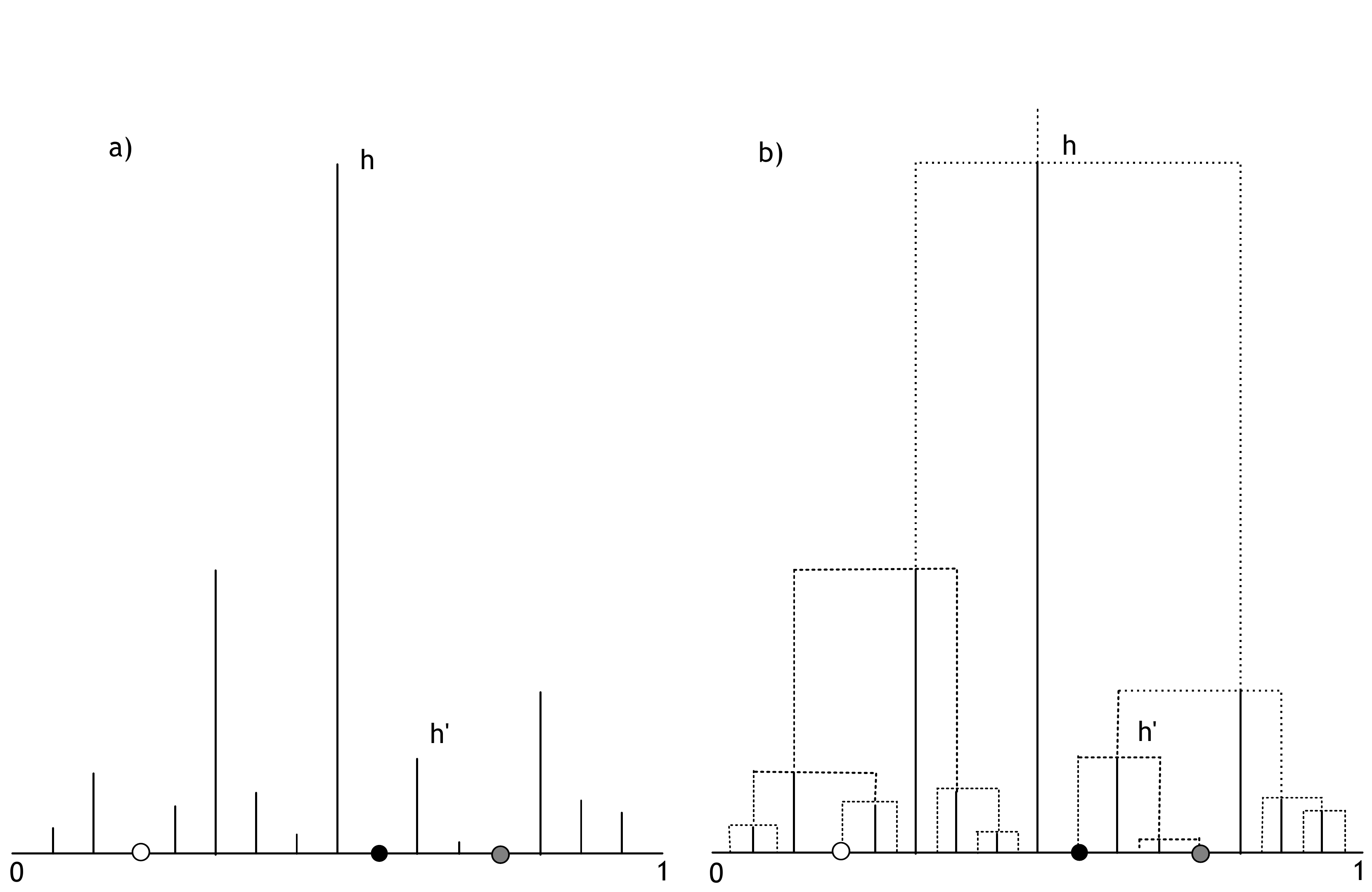}
\caption{a) A comb-like function with finite support on $[0,1]$. The distance between the black dot and the grey dot is $2h'$, whereas $2h$ is the distance from either of these dots to the white dot; b) In dashed lines, the ultrametric tree associated to the comb shown in a).}
\label{fig:comb}
\end{figure}
\noindent
Fig \ref{fig:comb} shows a comb and how an ultrametric tree can be embedded into it. \\

Let us give a first example of comb.  Let $S=\{x_i:1\le i\le n\}$ be a finite subset of elements of an ultrametric space $(U,d)$ labelled so as to satisfy the property expressed as Eq \eqref{eqn:intro} in the introduction. Set $h_i:=d(x_i,x_{i+1})$ for $1\le i\le n-1$, so that the metric of $S$ is completely characterized by these $n-1$ distances. Defining $I=[0,n]$ and $f:I\to\R_+$ by $f:=\sum_{i=1}^{n-1}h_i\indicbis{i}$, it is easy to see that $(S,d)$ is isometric to $(\dot I, \dtf)$. \\

Let us give an example of a random comb related to the celebrated Kingman coalescent. 
First, any comb can be related to an exchangeable coalescent as follows. Let $f$ be a comb on $[0,1]$ and $(V_i)$ be independent and identically distributed (i.i.d.) random variables uniform in $(0,1)$. Note that a.s. $f(V_i)=0$ for all $i$.  For any $t>0$, define  the partition $R_f(t)$ on $\N$ induced by the equivalence relation $\sim_t$
$$
i\sim_t j \Leftrightarrow \dtf(V_i, V_j) \le t.
$$
Note that if $\{f\not=0\}$ is dense, then $R_f(0+)$ is the finest partition (all singletons).
The process $(R_f(t);t>0)$ is an exchangeable coalescent process, in the sense that its law is invariant under permutations of $\N$ (see \citealt{BerBook2} for details).

Now let $f$ be itself random, defined on $I=[0,1]$ as
$$
f = \sum_{j\ge 1} \tau_j \indicbis{U_j},
$$
where the families of r.v. $(U_j)$ and $(\tau_j)$ are independent and independent of $(V_i)$, the $(U_j)$ are i.i.d. uniform on $(0,1)$ and $\tau_j=\sum_{k\ge j+1} e_k$, where $e_k$ are independent exponential r.v. with parameter $k(k-1)/2$. Then it is well-known that the process $(R_f(t); t\ge 0)$ has the same law as the Kingman coalescent (\citealt{Kin82}).

\subsection{Completion in the comb metric}
Now let us come back to a deterministic comb-like function $f$ on some compact interval $I$.
Consider $(s_n)$ a sequence of $\{f=0\}$ converging in the usual sense to $s\in I$ and set $I_n=(s\wedge s_n, s\vee s_n)$. Since for any $\varepsilon>0$, $\{f\ge \varepsilon\}\cap I_n$ is empty for $n$ large enough, $\dtf(s_n,s)$ goes to 0 as $n\to \infty$. 

If $f(s)=0$, and we still denote by $s\in \dot{I}$ its equivalence class, then $(s_n)$ converges to $s$ in $\dot{I}$ (i.e., in the comb metric). But if $f(s)\not=0$, then $(s_n)$ is a Cauchy sequence in $\{f=0\}$ for $\dtf$ without limit. Let us now describe how we perform the completion of $\dot{I}$.\\
\\
Let $t\in I$ such that $s<t$. If $(s_n)$ decreases, then $\dtf(s_n,t)$ converges to $\dtf(s,t)$. But if $(s_n)$ increases, then $\dtf(s_n,t)$ converges to $f(s)\vee\dtf(s,t)$. So when completing the comb metric space, we will need to distinguish between limits of increasing sequences and limits of decreasing sequences.\\ 
\\
Now we extend $\dtf$ to $I\times \{l,r\}$ by the following definitions for $s< t\in I$
$$
\dtf((s,r),(t,l)) = \dtf(s,t), \qquad \dtf((s,l),(t,l)) = f(s)\vee\dtf(s,t), 
$$
$$
\dtf((s,r),(t,r)) = \dtf(s,t)\vee f(t),  \qquad \dtf((s,l),(t,r)) = f(s)\vee\dtf(s,t)\vee f(t),
$$
or equivalently
$$
\dtf((s,r),(t,l)) = \sup_{(s, t)} f, \qquad \dtf((s,l),(t,l)) = \sup_{[s, t)} f, 
$$
$$
\dtf((s,r),(t,r)) =\sup_{(s, t]} f,  \qquad \dtf((s,l),(t,r)) = \sup_{[s, t]} f,
$$
and the symmetrized definitions for $s> t$. If $s=t$, the four last quantities are respectively defined as $f(t)$, 0, 0, $f(t)$.
We denote by $\bar{I}$ the quotient space $I\times\{l,r\}|_\sim$, where $\sim$ is the equivalence relation 
 $$
 s'\sim t'\Leftrightarrow \dtf(s',t')=0.
 $$
\begin{dfn}
For each $t\in I$, we will call $(t,l)$ the \emph{left face} of $t$ and $(t,r)$ its \emph{right face}. In particular if $f(t)=0$, the left face and the right face of $t$ are identified in $\bar{I}$.  
\end{dfn}

\begin{prop}
\label{prop:first sense}
The space $(\bar{I}, \dtf)$ is a compact, ultrametric space. 
\end{prop}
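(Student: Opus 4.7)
The plan is to verify, in order, that the formula $\dtf$ is ultrametric on $I\times\{l,r\}$, that it descends to a bona fide ultrametric on the quotient $\bar I$, and that the resulting metric space is sequentially compact.

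For ultrametricity, I would begin by noting that each of the four case formulas expresses $\dtf(s',t')$ as the supremum of $f$ over an explicit subinterval $J(s',t')$ of $[s\wedge t,s\vee t]$, with the inclusion or exclusion of each endpoint decided by the attached face in a symmetric manner: the endpoint is included when its face ``points inward'' (left face at the left endpoint, right face at the right endpoint). The ultrametric inequality $\dtf(r',t')\le\max(\dtf(r',s'),\dtf(s',t'))$ then reduces to the set inclusion $J(r',t')\subseteq J(r',s')\cup J(s',t')$. The crucial case is $r<s<t$: here $s$ is an interior point of $J(r',t')$, and the face conventions guarantee that $s$ belongs to $J(r',s')$ when $\epsilon_s=r$ and to $J(s',t')$ when $\epsilon_s=l$, so $s$ is always covered, as are the two open arcs flanking it. The remaining orderings ($s$ outside $[r,t]$ or coinciding with $r$ or $t$) boil down to monotonicity of $J$ in its arguments with matching endpoint conventions; the degenerate cases $s=t$ are handled by the ad hoc rules stated before the definition of $\bar I$. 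Symmetry and the identity $\dtf(s',s')=0$ on identical faces are built into the definitions, the ultrametric inequality makes the relation $\dtf=0$ transitive on $I\times\{l,r\}$, and therefore $\dtf$ descends to a genuine ultrametric on $\bar I$.

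For compactness I would prove sequential compactness. Given a sequence $((s_n,\epsilon_n))$ in $I\times\{l,r\}$, compactness of $I$ together with pigeonhole on $\{l,r\}$ yields a subsequence along which $s_{n_k}\to s$ in $I$ and $\epsilon_{n_k}$ equals a constant face $\epsilon$. Extracting once more, either $s_{n_k}=s$ eventually, in which case the subsequence is already convergent, or $s_{n_k}$ is strictly monotone. The key input in the monotone case is the defining hypothesis that $\{f\ge\eta\}$ is finite for every $\eta>0$: since the $s_{n_k}$ are then pairwise distinct, this forces $f(s_{n_k})\to 0$ automatically, and the same finiteness argument applied to the shrinking intervals between $s_{n_k}$ and $s$ shows that the supremum of $f$ over them also tends to $0$. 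Consequently $(s_{n_k},\epsilon)$ converges in $\bar I$ to $(s,l)$ when $s_{n_k}\uparrow s$ and to $(s,r)$ when $s_{n_k}\downarrow s$, irrespective of $\epsilon$, which yields compactness.

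The main obstacle I foresee is the careful case analysis in the ultrametric step because of the left/right face bookkeeping; the compactness argument is comparatively routine once one observes that strict monotonicity of $s_{n_k}$ combined with the finiteness of $\{f\ge\eta\}$ automatically delivers $f(s_{n_k})\to 0$, obviating any separate extraction on the values of $f$ at the sequence points.
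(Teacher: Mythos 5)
Your proof is correct and follows essentially the same route as the paper: ultrametricity via the observation that each $\dtf(s',t')$ is a supremum of $f$ over an interval whose endpoints are included according to the face conventions (the paper simply calls this step elementary), and compactness via sequential compactness after extracting a monotone subsequence with constant face. Your treatment of the monotone case is slightly more streamlined than the paper's, since observing that finiteness of $\{f\ge\eta\}$ forces $f(s_{n_k})\to 0$ for pairwise distinct terms lets you avoid the paper's separate case split on whether infinitely many terms lie in $\{f\neq 0\}$, but the underlying argument is the same.
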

\noindent
The converse of this proposition will be the object of the next section.
\begin{dfn}
We call $(\bar{I}, \dtf)$ the \emph{comb metric space} associated with the comb $f$.
\end{dfn}
\begin{proof}[Proof of Proposition \ref{prop:first sense}] The fact that $\dtf$ is ultrametric is elementary. Let us show the compactness of $\bar I$.

 Let $(x_n')$ be a sequence of $\bar I$. Denote by $x_n$ the projection of $x_n'$ on $I$. Since $(x_n)$ takes values in the compact interval $I$, it has a converging subsequence in the usual metric. Up to extraction, we can assume that $(x_n)$ is a monotonic sequence converging in the usual metric to $x\in I$.
 
First assume the sequence $(x_n)$ has only finitely many elements which do not belong to $\{f=0\}$ (i.e., all but finitely many $x_n'$ are such that $x_n'=x_n$). If $x\in \{f=0\}$, then as explained in the beginning of this section, $(x_n')$ converges to $x$ in the comb metric, because $\dtf(x_n',x)= \dtf(x_n,x)=\sup_{(x_n\wedge x, x_n\vee x)}f$ which goes to 0 as $n\to\infty$. If $x\not\in\dot I$ and the subsequence is increasing (resp. decreasing), then there is convergence in the comb metric towards $x'=(x,l)$ (resp. $x'=(x,r)$). Indeed, assuming for example that $(x_n)$ is decreasing, then $\dtf(x', x_n')= \sup_{(x, x_n)}f$, which goes to 0 as $n\to\infty$. 
 
Now assume that $(x_n')$ contains infinitely many points belonging to $\bar I\setminus \dot I$, that is $(x_n)$ contains infinitely many points in $\{f\not=0\}$. Up to extraction, we can assume that that they are all left faces or all right faces. Without loss of generality, let us assume that they are all left faces. If $(x_n)$ is stationary, then $x_n'=(x,l)$ for $n$ large enough, and there is trivially convergence to $(x,l)$. Let us assume now that $(x_n)$ is not stationary so that $x_n\not=x$ for all $x$ (since $(x_n)$ is monotonic).

If $(x_n)$ is increasing, then $\dtf(x_n',(x,l))= \sup_{[x_n, x)}f$, which goes to 0 as $n\to\infty$, so that $(x_n')$ converges to $(x,l)$. If $(x_n)$ is decreasing then $\dtf((x,r),x_n')= \sup_{(x, x_n)}f$, which goes to 0 as $n\to\infty$ and $(x_n')$ converges to $(x,r)$. 
\end{proof}

\section{Compact ultrametric spaces are comb metric spaces}
\label{sec:converse}

\subsection{Isometry between ultrametric spaces and comb metric spaces}
In this section, we consider a compact ultrametric space $(U,d)$, endowed with a finite measure $\mu$ on its Borel $\sigma$-field. Set $m:=\mu(U)$ and $I=[0,m]$.

\begin{thm}
\label{thm:second sense}
Assume that $\mu$ has no atom and charges every ball with non-zero radius. Then there exists a comb-like function $f$ on $I$ and a map $\bar\phi:(\bar{I}, \dtf)\to (U,d)$, such that $\bar{\phi}$ is a global isometry, mapping the Lebesgue measure on $I$ to the measure $\mu$. 
\end{thm}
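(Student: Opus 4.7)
The plan is to construct both $f$ and $\bar\phi$ from a hierarchical decomposition of $U$ by closed balls of radii decreasing to zero. In an ultrametric space every closed ball is open, so for each $r>0$ the partition of $U$ into closed balls of radius $r$ is finite by compactness. Fixing a sequence $r_n\downarrow 0$ with $r_0\ge \mathrm{diam}(U)$ one obtains finite partitions $\Pi_n$ with $\Pi_0=\{U\}$ and $\Pi_{n+1}$ refining $\Pi_n$; the standard ultrametric fact that the distance between two disjoint closed balls is constant across representative pairs shows that for each $B\in \Pi_{n-1}$, the set of children $\mathrm{Ch}(B)\subset \Pi_n$ is itself a finite ultrametric space under $d$.

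I would next produce a good linear order on each $\Pi_n$. For each parent $B$, I invoke the appendix lemma that yields \eqref{eqn:intro}, applied to $\mathrm{Ch}(B)$ as a finite ultrametric space, to obtain a labelling $D_1,\ldots,D_{K(B)}$ of the children such that $d(D_i,D_j)=\max_{i\le m<j} d(D_m,D_{m+1})$; concatenating these labellings hierarchically gives a total order $\prec_n$ on $\Pi_n$, consistent as $n$ grows. Using $\mu$, I then assign to each $B\in\Pi_n$ the closed interval $I_B=[a_B,a_B+\mu(B)]$ with $a_B=\sum_{B'\prec_n B}\mu(B')$; the no-atom hypothesis forces $\mu(B_n(x))\to 0$ for every $x\in U$, and the charging-every-ball hypothesis makes every $I_B$ non-degenerate. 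I define $f:I\to\R_+$ by $f(t)=d(B,B')$ at each boundary point $t=b_B=a_{B'}$ between adjacent balls $B,B'$ in some $\Pi_n$ (the value being the same at any level at which $t$ is a boundary, again by the constant-distance property), and $f=0$ elsewhere; since sibling distances at level $n$ lie in $(r_n,r_{n-1}]$, only finitely many boundaries satisfy $f(t)\ge \varepsilon$ for each $\varepsilon>0$, so $f$ is a comb.

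The map $\bar\phi:\bar I\to U$ is then obtained by following nested intervals back to their intersection in $U$: for each $x\in U$ the intervals $I_{B_n(x)}$ nest down to a single $t(x)\in I$, which is interior to all $I_{B_n(x)}$ unless $x$ is eventually leftmost or rightmost in its enclosing balls, in which case $t(x)$ is a boundary point and $x$ corresponds to the right or left face respectively. Conversely, any $(t,l)$ or $(t,r)\in\bar I$ determines a unique descending sequence of balls (from the left or from the right of $t$) whose intersection is a single point of $U$. By construction the Lebesgue measure of $I_B$ equals $\mu(B)$ on the ball algebra, which generates the Borel $\sigma$-field, so the resulting bijection carries Lebesgue measure to $\mu$.

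The main obstacle is verifying the isometry, and this is where the good-labelling step is crucial. Given $x\ne y$, let $n_0$ be the first level at which $B_{n_0}(x)\ne B_{n_0}(y)$. The good ordering on $\mathrm{Ch}(B_{n_0-1}(x))$ produces a chain of consecutive siblings $B_{n_0}(x)=C_0,C_1,\ldots,C_k=B_{n_0}(y)$ with $d(C_i,C_{i+1})\le d(C_0,C_k)=d(x,y)$ and $\max_i d(C_i,C_{i+1})=d(x,y)$; the $f$-values at the $k$ boundaries between the $C_i$ thus supremize to exactly $d(x,y)$. Any other boundary in the subinterval of $I$ between $\bar\phi^{-1}(x)$ and $\bar\phi^{-1}(y)$ arises from a split strictly inside some $C_i$ and has $f$-value at most $r_{n_0}<d(x,y)$, so it does not affect the supremum. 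The last bookkeeping step is to match this supremum with the correct open, half-open or closed subinterval appearing in the face-dependent definition of $\dtf$ on $\bar I$; the choice of face of $t(x)$ and $t(y)$ was arranged precisely so that this matches, since the face records whether $x$ (resp. $y$) sits at the extreme of its enclosing balls and hence whether the endpoint $f$-value should be included in the sup.
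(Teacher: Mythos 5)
Your proposal is correct, and it reaches the theorem by a genuinely different route from the paper's, even though both share the same skeleton (hierarchical ball decomposition, intervals of length $\mu(B)$ laid end to end, $\phi$ defined by nested-ball intersections). The paper never discretizes the radius: it works with the canonical family $R(t)$ for \emph{all} $t>0$ and its jump times $t_0>t_1>\cdots$, and the payoff is that between consecutive jump times every ball splits into children that are pairwise \emph{equidistant} (all mutual distances equal exactly $t_n$). Consequently the children can be ordered arbitrarily (the paper ranks them by decreasing measure), $f$ at a new breakpoint is simply the jump time $t_n$, and the isometry collapses to the one-line equivalence $\dtf(r,s)\le t \Leftrightarrow \Ac(t)\cap(r,s)=\varnothing \Leftrightarrow d(\phi(r),\phi(s))\le t$; the appendix labelling lemma is not used at all. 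You instead fix an arbitrary sequence $r_n\downarrow 0$, so siblings need not be equidistant, and you must import the good-labelling lemma behind \eqref{eqn:intro} at every parent, set $f$ equal to actual inter-ball distances, and then argue the supremum via the max-of-consecutive-distances identity together with the bound $r_{n_0}<d(x,y)$ on deeper splits --- all of which is sound, but it is an extra combinatorial input and a more delicate sup computation that the paper's choice of filtration makes unnecessary. On the other hand, your direct definition of $\bar\phi$ on faces by one-sided nested chains buys you surjectivity constructively, where the paper extends $\phi$ by uniform continuity to the completion and then needs a separate measure-plus-density argument to show $\bar\phi(\bar I)=U$. The two places where your write-up leans on assertion rather than verification are (i) the final face bookkeeping (that ``eventually leftmost'' corresponds to the right face and that the half-open/closed intervals in the definition of $\dtf$ on $\bar I$ pick up exactly the endpoint values $f(t(x))$, $f(t(y))$ when they should), and (ii) the facts that no point is simultaneously eventually leftmost and eventually rightmost and that breakpoints are dense, both of which follow from the charging and no-atom hypotheses; these are routine but should be spelled out.
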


\begin{rem}
In the previous statement, it is implicit that the Lebesgue measure is extended to $\bar I$ by giving mass 0 to the countable set of points with distinct faces.
\end{rem}
\begin{proof}[Proof of Theorem \ref{thm:second sense}] 
For any $t>0$, let $\sim_t$ the relation on $U$ defined as
$$
x\sim_t y \Leftrightarrow d(x,y)\le t.
$$
The relation $\sim_t$ is clearly reflexive and symmetric. Let us show that it is also transitive. For any $x,y,z\in U$ such that $x\sim_t y$ and $y\sim_t z$, by ultrametricity of $d$, $d(x,z)\le \max(d(x,y), d(y,z)) \le t$, so that $x\sim_t z$. \\

Let $R(t)$ be the partition of $U$ into equivalence classes of $\sim_t$. For any $B\subset U$ element of $R(t)$, for any $x\in B$ and $y\in U$,
$$
y\in B \Leftrightarrow x\sim_t y\Leftrightarrow d(x,y)\le t,
$$ 
so that $B$ is actually the closed ball with center $x$ and radius $t$ (any element of a ball in an ultrametric space is its center). In particular, $\mu(B)>0$ and since $\mu$ is finite, the partition $R(t)$ is at most countable. Besides, the elements of $R(t)$ can be ranked in decreasing order of their measures so one can choose $x_n$ in its $n-$th element (countable axiom of choice). Let us show that this sequence cannot be infinite. Indeed, this would yield a sequence $(x_n)$ of $U$ such that $d(x_n,x_k)>t$ for any $n\not=k$, which would contradict the compactness of $U$. In conclusion $R(t)$ is finite for any $t>0$. Also note that $R(t)$ is reduced to the singleton $\{U\}$ as soon as $t\ge t_0:=\text{diam}(U)$.\\

Set $N(t):=\#R(t)$.
Observe that the process $(N(t);t>0)$ is a nonincreasing, right-continuous process with values in $\N$, and so it is càdlàg and its jump times can be labelled in decreasing order $t_0 > t_1 >\cdots$ Note that the process $(R(t); t>0)$ is also constant on any interval $[t_{n+1}, t_{n})$ and so is also càdlàg (for any topology on the set of partitions of $U$). 

As $t$ decreases, every ball of $U$ is thus fragmented into a finite number of balls with smaller, but positive, radius. Indeed, since $\mu$ charges all balls with positive radius and has no atom, $U$ has no isolated point and each ball can be partitioned into balls with strictly smaller, positive radii. In particular, the sequences of jump times of $R$ labelled in decreasing order, and of the trace of $R$ on any ball with positive radius, have limit 0.\\

Now we seek to order the elements of $R(t)$ in a consistent way. Namely define $R'(t):=U$ for any $t\ge t_0$, and more generally define $R'(t)$ as a $N(t)$-tuple of distinct elements of $R(t)$ recursively as follows. Let $n\ge 0$ and assume that $R'$ has been defined on $[t_n,+\infty)$. In particular, writing $k=N(t_n)$, $R'(t_n)$ is given in the form of a $k-$tuple $(B_1,\ldots, B_k)$.

Now for $t\in[t_{n+1}, t_n)$, replace each $B_i$ in this ordered list by the list of its subsets (which are all balls) with radius $t_{n+1}$ ranked (for example) in decreasing order of their measures. Notice that the elements of the resulting $R'(t)$ are not necessarily ranked in the order of their measures. \\

For any $t>0$, write $R'(t)=(B_1(t),\ldots, B_{N(t)}(t))$, set $A_0(t) =0$ and
$$
A_i(t) := \sum_{j=1}^i \mu(B_j(t))\qquad 1\le i \le N(t),
$$
so in particular $A_{N(t)}(t)= \mu(U)=m$. Also define
$$
J_i(t):= [A_{i-1}(t), A_i(t)] \qquad 1\le i \le N(t),
$$
and
$$
\Ac(t):=\{A_i(t); i=1,\ldots, N(t)-1\}.
$$
Then $(\Ac(t);t>0)$ forms a decreasing sequence of finite subsets of $(0,m)$ and we can define the countable set 
$$
\Ac:=\lim_{t\downarrow 0} \uparrow \Ac(t).
$$ 
As we pointed out earlier, the jump times of the trace of $(R(t))$ on any ball of $U$ has 0 as limit point, so that $\Ac$ is everywhere dense in $[0,m]$. Also observe that for any $a\in \Ac$, there is a unique $n\in \N$ such that for all $t>0$,
$$
a\in \Ac(t) \Leftrightarrow t\in (0,t_n).
$$
Set $f(a):=t_n$ and $f(x):=0$ for any $x\in[0,m]\setminus \Ac$. Then the function $f:[0,m]\to \R_+$ is a comb, because $\{f\ge \varepsilon\}$ is finite for any $\varepsilon >0$ (also note that here $\{f\not=0\}$ is dense so there is no need to consider the quotient space of $\dot I =\{f=0\}$).\\

Let $r\in[0,m]$ such that $f(r)=0$. Since $r\not\in \Ac$, for any $t>0$, there is a unique integer $n_r(t)$ such that $r\in J_{n_r(t)}(t)$. And since by construction (by definition of the $B_i$'s), the intervals $J_i(t)$ are fragmented as $t$ decreases, the sequence $J_{n_r(t)}(t)$ decreases (in the inclusion sense) as $t$ decreases. Note that the corresponding balls of $U$, $B_{n_r(t)}(t)$ satisfy $\mu(B_{n_r(t)}(t)) = |J_{n_r(t)}(t)|$, and also decrease in the inclusion sense as $t$ decreases. By definition, the radius of $B_{n_r(t)}(t)$ is $t$, and so this ball is nonempty. Therefore, the $B_{n_r(t)}(t)$ form a decreasing family (as $t$ decreases) of nonempty compact balls with radius $t$. By the finite intersection property, the limit of this sequence as $t\downarrow 0$ is nonempty, and since its radii go to 0, the limit is reduced to a singleton that we will denote $\phi(r)$.

Notice that the mapping $\phi$ satisfies for all $r\in \{f=0\}$
$$
r\in J_i(t)\Leftrightarrow \phi(r)\in B_i(t)\qquad t>0, 1\le i\le N(t).
$$
This shows in particular that the pre-image of $B_i(t)$ by $\phi$ is $J_i(t)$, which has Lebesgue measure $\mu(B_i(t))$ by construction. In other words, $\mu$ and the push-forward of Lebesgue measure by $\phi$ coincide on closed balls of $U$, and so coincide on the Borel $\sigma$-field of $U$.\\
 
Now for any $0\le r<s\le m$ such that $f(r)=f(s)=0$, for any $t>0$,
\begin{multline*}
\dtf(r,s)\le t \Leftrightarrow \sup_{(r,s)} f\le t \Leftrightarrow \Ac(t)\cap(r,s)=\varnothing \\ \Leftrightarrow \exists i: r,s\in J_i(t)\Leftrightarrow\exists i : \phi(r),\phi(s)\in B_i(t)\Leftrightarrow d(\phi(r),\phi(s))\le t,
\end{multline*}
so that $\phi:(\{f=0\},\dtf) \longrightarrow (U,d)$ is an isometry.\\

Recall that $I=[0,m]$, so that $\dot I=\{f=0\}= I\setminus \Ac$ and $\bar I$ is the completion of $\dot{I}$ embedded in $I\times\{l,r\}$ as in Proposition \ref{prop:first sense}. Then $\dot{I}$ is of course dense in $\bar{I}$ for the distance $\dtf$ and it is standard that there exists a unique continuous extension $\bar\phi:(\bar{I},\dtf)\to (U,d)$ of $\phi$, which is obviously an isometry.\\

It only remains to prove that $\bar{\phi}$ is surjective. Set $F:=\bar\phi(\bar I)\subset U$. Since $\mu(F) = \text{Leb}(I)=m =\mu(U)$, we have $\mu(U\setminus F)=0$ so that  $U\setminus F$ contains no ball of positive radius. Assume that there is $y\in U\setminus F$. For any $\varepsilon>0$, the ball centered at  $y$ with radius $\varepsilon$ is not contained in $U\setminus F$, and so it intersects $F$. This shows that $y$ is the limit of a sequence $(y_n)$ of elements of $F$. So for each $n$, there is $x_n\in \bar I$ such that $y_n=\bar\phi(x_n)$. Since $\bar\phi$ is isometric and $(y_n)$ is a Cauchy sequence, $(x_n)$ also is one, and thereby converges to some $x\in \bar I$. By continuity of $\bar\phi$, $\bar\phi(x) = y$.
\end{proof}
\begin{rem}
Any compact, ultrametric space $(U,d)$ can be endowed with a finite measure charging every ball with non-zero radius. One example of such a measure is the so-called \emph{visibility measure} (\citealt{Lyo94}), giving mass 1 to $U$, and where the mass is equally divided between sub-balls at each jump time of the partition $R$ defined in the beginning of the proof. Since the atoms of the visibility measure are the isolated points of $U$, the theorem ensures that \textbf{any compact ultrametric space without isolated point is isometric to a comb metric space.} 
\end{rem}

\subsection{Two beautiful examples}

In this section, we consider two well-known (locally) compact ultrametric spaces and display for each one an isometric embedding into a comb metric space. More specifically, we fix an integer $p\ge 2$ and we introduce the following two ultrametric spaces. 

First, we consider the set $U_p$ of sequences $x=(x_n)_{n\in\Z}$ indexed by $\Z$ with values in $\{0,\ldots,p-1\}$ for which there is $n\in\Z$ such that $x_k=0$ for all $k\le n$, seen as the boundary of the (doubly) infinite $p$-ary tree, endowed with the distance $d_u$ defined below. In a second paragraph, we will consider the field $\Q_p$ of $p$-adic numbers endowed with the $p$-adic metric. For other works concerning graphic representations of $\Q_p$, see also \cite{Cuo91, Hol01}.

\subsubsection{The boundary of the infinite $p$-ary tree}

The space $U_p$ is endowed with the distance $d_u$ defined for any pair $x=(x_n)_{n\in\Z}$ and $y=(y_n)_{n\in\Z}$ of elements of $U_p$ by
$$
d_u(x,y) = p^{-v_u(x-y)},
$$
where 
$$
v_u(x):=\min\{n\in\Z:x_n\not=0\}, 
$$
with the convention $p^{-v_u(0)}=0$.  
It is well-known that the space $(U_p,d_u)$ is a locally compact, ultrametric space. Also note that its ultrametric structure can be seen directly by mapping $U_p$ to the boundary of the infinite $p$-ary tree. We propose here an alternative embedding, of the comb type, whereby left faces correspond to the infinite lines of descent in the $p$-ary tree.

Actually, we show how to construct explicitly the global isometry of the theorem between $(U_p,d_u)$ and a comb metric space defined on the whole half-line. The fact that the sequences are indexed by $\Z$ is actually not a requirement for this first example (but it will be for the next one), and the reader might as well think of sequences as indexed by $\N$ (whereby $(U_p,d_u)$ becomes compact).

Let $U'$ be the subset of $U_p$ consisting of sequences $x$ such that $\lim_{n\to+\infty}x_n =0$. For any nonzero $x\in U'$, set
$$
w(x):=\max\{n\in \Z :x_{n}\not=0\},
$$
and define $\hat{x}\in U_p$ by
$$
\hat{x}_n:=
\left\{
\begin{array}{cl}
x_n &\mbox{ if } n<w(x)\\
x_n-1 &\mbox{ if } n=w(x)\\
p-1 &\mbox{ if } n>w(x).
\end{array}
\right.
$$
Note that $v_u(x-\hat{x})=w(x)$ so that $d_u(x,\hat{x})=p^{-w(x)}$.\\
 
Now recall that any non-negative real number $t$ can be mapped to a unique $x\in U_p$ and a unique nondecreasing sequence $(t_n)$ such that $t_n=\sum_{k\le n}  p^{-k} x_k$ and $t_n\le t < t_n + p^{-n}$,
so that in particular 
$$
t=\sum_{k\in\Z}  p^{-k} x_k.
$$
Let us write
$$
\phi(t):=x.
$$
Also if $x=\phi(t)\in U'$, 
we define $\phi_r(t)=x$ and $\phi_l(t)=\hat{x}$.
For example, with $p=2$, $\phi_r(3/4)=(\ldots,0,0;0,1,1,0,0,0\ldots)$ and $\phi_l(3/4)=(\ldots,0,0;0,1,0,1,1,1\ldots)$ (the semi-colon separates the negative indices from the non-negative indices). 

It is immediate to see that $\phi$ is one-to-one, with left-inverse $\phi^{-1}$ mapping $x\in U_p$ to $\sum_{k\in\Z}  p^{-k} x_k$. Note that $\phi^{-1}$ itself is not one-to-one since for any $x\in U'$, $\phi^{-1}(x)=\phi^{-1}\left(\hat{x}\right)$. Indeed, denoting $n=w(x)$ and $t=\phi^{-1}(x)=\sum_{k\le n}  p^{-k} x_k$, we get
$$
\sum_{k\in\Z}  p^{-k} \hat{x}_k = \sum_{k<n}  p^{-k} x_k + p^{-n}(x_n-1) +\sum_{k>n}  p^{-k} (p-1) = t -p^{-n}+(p-1)\frac{p^{-n-1}}{1-1/p}= t.
$$
Finally, for any $t\in\R_+$, define
$$
F_p(t):=
\left\{
\begin{array}{cl}
p^{-w(\phi(t))} &\mbox{ if }  \phi(t)\in U'\\
0 &\mbox{ otherwise.}
\end{array}
\right.
$$
The function $F_p$ is a comb on each compact subinterval of $I=\R_+$, and it is not difficult to see that $\dtfp$ actually defines a comb metric space $(\bar{I},\dtfp)$ which is not compact, but which is locally compact. Then we can construct a global isometry $\bar{\phi}:(\bar{I},\dtfp)\to(U_p,d_u)$ as follows.
$$
\bar{\phi}(t'):=
\left\{
\begin{array}{cl}
\phi(t) &\mbox{ if } t'=t \in \{F_p=0\}\\
\phi_l(t) &\mbox{ if } t' =(t,l)\\
\phi_r(t)& \mbox{ if } t' =(t,r).
\end{array}
\right.
$$
It is an exercise to show that the isometry of the theorem can be taken equal to the function $\bar{\phi}$ just defined when $\mu$ is the visibility measure adapted to the locally compact case, that is the measure which puts mass $p^{n}$ on any ball with radius $p^n$. \\

\begin{figure}[!ht]
\includegraphics[width=\textwidth]{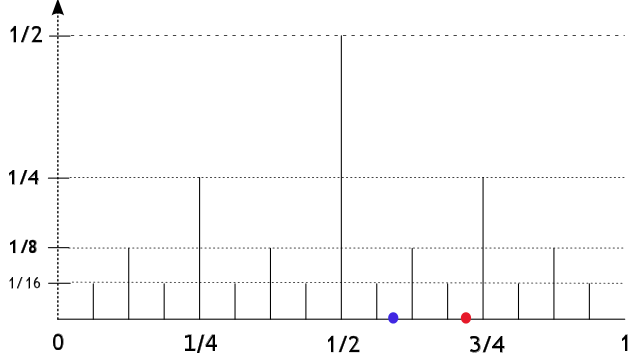}
\caption{The dyadic comb $F_2$ on $[0,1]$. The set of sequences of $0$'s and $1$'s indexed by $\N$ can be mapped isometrically by $\bar\phi^{-1}$ to the comb metric space associated with $F_2$ on $[0,1]$. The right face (resp. left face) of each element of $\{F_2\not=0\}$ is the image of a sequence with limit 0 (resp. with limit 1).  
In the comb metric the two dots are at distance $1/8$. On the other hand, $\phi$ maps the blue (left) dot to some sequence $(\ldots,0,0;0,1,0,0,1,\ldots)$ and the red (right) dot to some sequence $(\ldots,0,0;0,1,0,1,1,\ldots)$, so the distance between these two sequences is $2^{-3}=1/8$, as expected. }
\label{fig:dyadiccomb}
\end{figure}

\subsubsection{The $p$-adic (ultra)metric}

 We now deal with another locally compact ultrametric space, namely the field $\Q_p$ of $p$-adic numbers, where  $p$ is now assumed to be a prime number. For any nonzero integer $a$, let $v_p(a)$ denote the power of $p$ in the decomposition of $a$ in product of prime numbers, and for any nonzero rational number $q=a/b$ set $v_p(q):=v_p(a)-v_p(b)$, which does not depend on the ratio of integers $a$ and $b$ chosen to represent $q$. The $p$-adic distance $d_p$ between two rational numbers $q$ and $q'$ is defined as
$$
d_p(q,q') = p^{-v_p(q-q')},
$$
with the convention $p^{-v_p(0)}=0$. 

The set $\Q_p$ called field of $p$-adic numbers is the completion w.r.t. the metric $d_p$ of the set of $p$-adic numbers. By definition, a $p$-adic number $q$ is a rational number that can be written in the form 
\begin{equation}
\label{eqn:rational}
q=\sum_{k\in \Z} p^{-k} x_k,
\end{equation}
where $x$ is a sequence with values in $\{0,\ldots, p-1\}$ and finitely many nonzero terms. This (unique) sequence $x$ was denoted $\phi(q)$ in the previous paragraph but here we will write 
$$
x:=\psi(q),
$$
because we aim at extending $\psi$ to $\Q_p$. Notice that 
$$
v_p(q) = -\max\{k\in\Z: x_{k}\not=0\},
$$ 
so if $y=\rho(x)$ denotes the sequence $(x_{-k})_{k\in\Z}$, then $v_p(q)= \min\{k\in\Z: y_{k}\not=0\} =v_u(y)$, that is
$$
v_p(q) = v_u(\rho\circ\psi(q)).
$$
Therefore,
$$
d_u(0,\rho\circ\psi(q)) = p^{-v_u(\rho\circ\psi(q))}=p^{-v_p(q)} = d_p(0, q).
$$
Due to the linearity of $\psi$ (in the $p$-adic addition) and of $\rho$, this equality extends to any pair of rational numbers $q,q'$
$$
d_u(\rho\circ\psi(q'),\rho\circ\psi(q)) = d_p(q, q').
$$
For now, the map $\rho\circ\psi$ establishes a local isometry from $(\Q, d_p)$ to $(U_p, d_u)$.
By definition, $\Q_p$ is the completion of $\Q$ with respect to the $p$-adic metric $d_p$, and it is known that any $q\in \Q_p$ can be written in a unique way in the form \eqref{eqn:rational} called Hensel's decomposition, where $x$ is still denoted $\psi(q)$ but can now have infinitely many nonzero terms indexed by negative indices and the sum is understood in the $p$-adic sense. 
As a consequence, we get the following proposition.
\begin{prop} The function $\rho\circ\psi:(\Q_p, d_p)\to (U_p, d_u)$ is a global isometry. As a consequence, the function $\bar{\phi}^{-1}\circ\rho\circ\psi:(\Q_p, d_p)\to (\bar{I}, \dtfp)$ is a global isometry between $\Q_p$ and a comb metric space.
\end{prop}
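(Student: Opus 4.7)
The plan is to extend the local isometry $\rho\circ\psi$, already established on $\Q$ in the excerpt, to all of $\Q_p$ using Hensel's decomposition, and then verify surjectivity. By Hensel's decomposition, every $q\in\Q_p$ has a unique representation $q=\sum_{k\in\Z}p^{-k}x_k$ with digits $x_k\in\{0,\ldots,p-1\}$; $p$-adic convergence forces $x_k=0$ for all sufficiently large positive $k$, while infinitely many nonzero terms are permitted for negative $k$ since $|p^{-k}|_p=p^k\to 0$ as $k\to-\infty$. This defines $\psi:\Q_p\to\{\text{digit sequences}\}$, and the reversed sequence $\rho\circ\psi(q)$ vanishes for all sufficiently small indices, hence lies in $U_p$.

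For the isometry property, I would read off directly from the expansion that $v_p(q)=-\max\{k:\psi(q)_k\neq 0\}=\min\{k:(\rho\circ\psi(q))_k\neq 0\}=v_u(\rho\circ\psi(q))$ for every $q\in\Q_p$, which yields $d_p(0,q)=d_u(0,\rho\circ\psi(q))$. Applying this to $q-q'$ together with $p$-adic additivity of the Hensel expansion (already invoked in the excerpt to pass from $v_p$ to $d_p$) gives $d_p(q,q')=d_u(\rho\circ\psi(q),\rho\circ\psi(q'))$ for all $q,q'\in\Q_p$. An equivalent and perhaps cleaner route is to use density: the identity holds on $\Q$ by the excerpt, $\Q$ is dense in $\Q_p$ by definition of the completion, so $\rho\circ\psi$ extends uniquely by uniform continuity to an isometric map $\Q_p\to U_p$, and by uniqueness of the continuous extension this must coincide with the Hensel definition.

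Surjectivity is the remaining substantive point. Given $y\in U_p$, I choose $n_0\in\Z$ with $y_k=0$ for all $k\le n_0$ and set $x_k:=y_{-k}$, so that $x_k=0$ for $k\ge -n_0$. The formal $p$-adic series $q:=\sum_{k\in\Z}p^{-k}x_k$ then has only finitely many terms with $k\ge -n_0$, while the remaining terms satisfy $|p^{-k}x_k|_p\le p^k\to 0$ as $k\to-\infty$; hence the series converges in $(\Q_p,d_p)$. By uniqueness of Hensel's decomposition, $\psi(q)=x$ and therefore $\rho\circ\psi(q)=y$. This establishes that $\rho\circ\psi:(\Q_p,d_p)\to(U_p,d_u)$ is a global isometry. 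The second assertion is then immediate: composing with the global isometry $\bar\phi^{-1}:(U_p,d_u)\to(\bar I,\dtfp)$ built in the previous subsubsection yields the desired global isometry $\bar\phi^{-1}\circ\rho\circ\psi:(\Q_p,d_p)\to(\bar I,\dtfp)$.

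The main obstacle, if there is one, is bookkeeping: ensuring that Hensel's decomposition correctly identifies the $p$-adic valuation with the combinatorial valuation $v_u$ of the reversed sequence, and that the convergence condition for formal $p$-adic series (digits eventually zero at $+\infty$) matches exactly the eventual-zero condition (digits eventually zero at $-\infty$) that defines $U_p$ under the index reversal $\rho$. Once these compatibilities are made explicit, the isometry and surjectivity follow mechanically from the corresponding rational statements combined with density.
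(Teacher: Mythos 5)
Your proposal is correct and follows essentially the same route as the paper, which derives the proposition directly from the preceding discussion: the valuation identity $v_p(q)=v_u(\rho\circ\psi(q))$, linearity of $\psi$ and $\rho$, and the extension of the Hensel decomposition to all of $\Q_p$. You are somewhat more explicit than the paper on two points it leaves implicit --- the surjectivity of $\rho\circ\psi$ (via convergence of the reversed digit series and uniqueness of the Hensel decomposition) and the density/continuous-extension alternative --- but these are elaborations of the same argument rather than a different approach.
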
 

\begin{figure}[!ht]
\includegraphics[width=\textwidth]{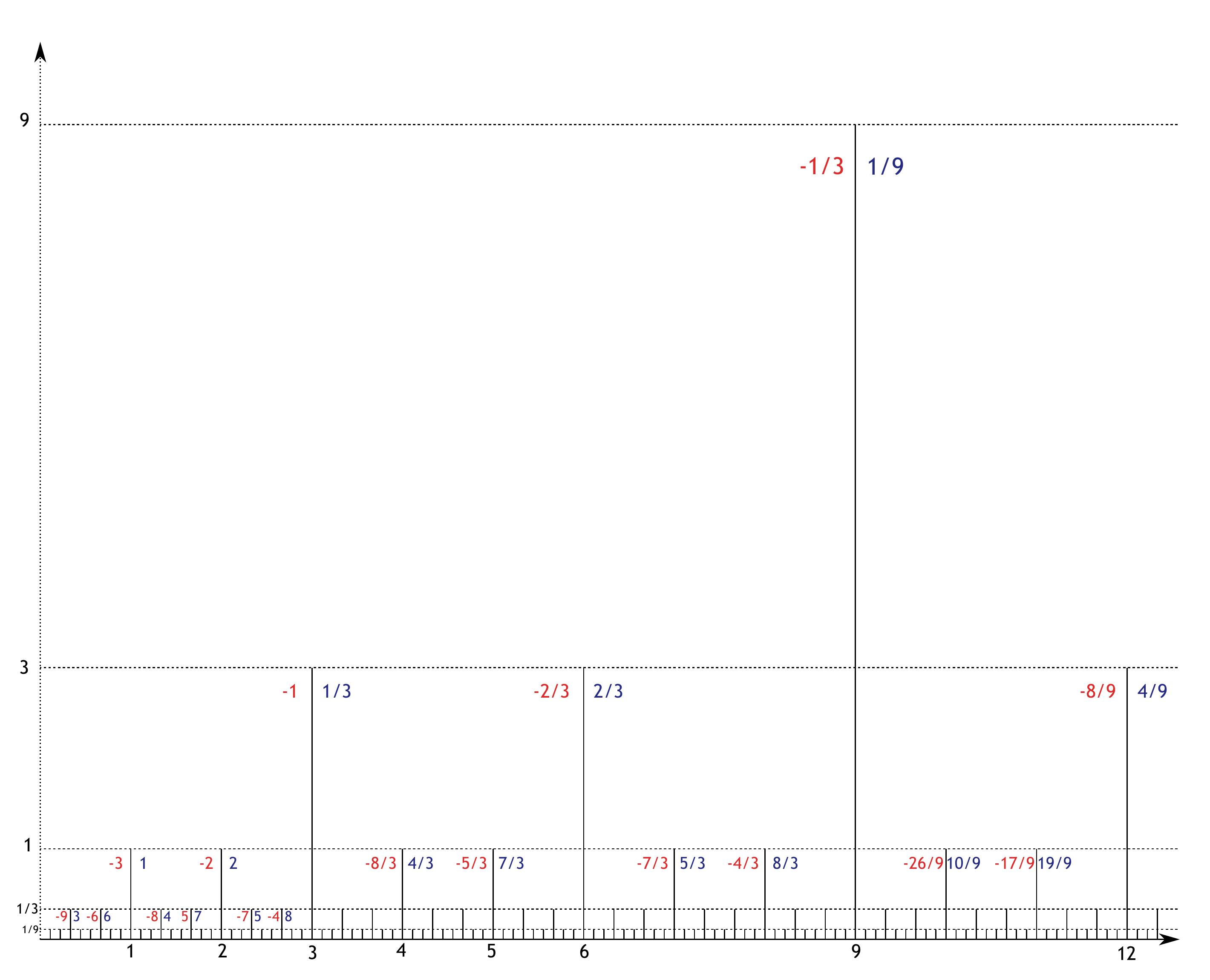}
\caption{The triadic comb $F_3$ on $[0,\infty)$. The field $\Q_3$ can be mapped isometrically by $\chi$ to the comb metric space associated with $F_3$. Each triadic number is mapped by $\chi$ to a left face (in red) or a right face (in blue) of $\{F_3\not=0\}$. The limits of Cauchy sequences of triadic numbers are mapped by $\chi$ to $\{F_3=0\}$. Check on the figure that $d_3(4/9,1/3)=d_3(4/9, 2/3)=9$, that $d_3(1/3,-1/3)=d_3(1/3, 2/3)=3$ and that $d_3(-2,2) = d_3(-2,5)=1$. 
 }
\label{fig:triadicfield}
\end{figure}

\noindent
Let us give a few examples. We set $\chi:=\bar{\phi}^{-1}\circ\rho\circ\psi$ so that $\chi^{-1}={\psi}^{-1}\circ\rho\circ\bar{\phi}$ ($\rho$ is an involution). 
We will use repeatedly the following fact. 
\begin{lem}
Let $n\in \Z$ and $x^{(n)}\in U$ be the sequence defined by $x_k^{(n)}=0$ if $k<n$ and $x^{(n)}_k =p-1$ for all $k\ge n$. Then $\psi^{-1}\circ\rho\left(x^{(n)}\right)\in \Q_p$ is actually the integer $-p^{n}$. 
 \end{lem}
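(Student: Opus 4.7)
The plan is to unwind the three maps $\psi$, $\rho$, and $\psi^{-1}$ on the explicit sequence $x^{(n)}$, and then recognize the resulting $p$-adic series as a convergent geometric series in $\Q_p$.

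First I would identify the sequence $y := \rho(x^{(n)})$. By definition $\rho$ sends $(x_k)_{k\in\Z}$ to $(x_{-k})_{k\in\Z}$, so $y_k = x^{(n)}_{-k}$. From the definition of $x^{(n)}$, this gives $y_k = p-1$ when $-k \ge n$, i.e. when $k \le -n$, and $y_k = 0$ otherwise. In particular $y$ has only finitely many nonzero terms with positive index (none at all) and infinitely many nonzero terms with negative index, so it genuinely lies in the range of $\psi$ (the Hensel decomposition), not in the restricted subset of rationals with finite expansion.

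Next I would compute $\psi^{-1}(y)$ using Hensel's decomposition \eqref{eqn:rational}, interpreting the sum in the $p$-adic sense:
\[
\psi^{-1}(y) \;=\; \sum_{k\in\Z} p^{-k} y_k \;=\; (p-1)\sum_{k\le -n} p^{-k} \;=\; (p-1)\sum_{j\ge n} p^j,
\]
after the substitution $j=-k$. The main point, and really the only nontrivial step, is to evaluate the geometric series $\sum_{j\ge 0} p^j$ in $\Q_p$: since $|p|_p = p^{-1} < 1$, this series converges $p$-adically, and standard $p$-adic analysis gives $\sum_{j\ge 0} p^j = (1-p)^{-1}$. Therefore
\[
(p-1)\sum_{j\ge n} p^j \;=\; (p-1)\,p^n\,(1-p)^{-1} \;=\; -p^n,
\]
which is the desired identity $\psi^{-1}\circ\rho(x^{(n)}) = -p^n$.

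The only potential obstacle is keeping the various index conventions straight (the direction in which $\psi$ writes the Hensel expansion, the reversal performed by $\rho$, and the sign in $v_p$ versus $v_u$); once these are pinned down the computation is a one-line geometric series in $\Q_p$. Nothing deeper is needed, so a clean proof consists of these two displays preceded by the description of $y=\rho(x^{(n)})$.
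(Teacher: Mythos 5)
Your computation is correct, and it takes a genuinely different route from the paper. You unwind $\rho$ explicitly, write $\psi^{-1}\circ\rho\bigl(x^{(n)}\bigr)$ as the $p$-adically convergent series $(p-1)\sum_{j\ge n}p^j$, and evaluate it as a geometric series, $(p-1)\,p^n(1-p)^{-1}=-p^n$. The paper instead never sums a series: it adds $p^n$ to the unknown quantity, uses the fact that $\psi^{-1}\circ\rho$ is a homomorphism for $p$-adic addition, and observes that the carry propagates so that $\rho\bigl(x^{(n)}\bigr)+\rho\bigl(a^{(n)}\bigr)=\rho\bigl(x^{(k)}\bigr)+\rho\bigl(a^{(k)}\bigr)$ for all $k\ge n$; since the $p$-adic norm of this common value is at most $p^{-k}$ for every $k$, the sum is $0$ and the conclusion follows. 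Your argument is more self-contained on the arithmetic side (it only needs the convergence of $\sum p^j$ and the uniqueness of Hensel's decomposition, both standard), whereas the paper's argument leans on the homomorphism property of $\psi^{-1}\circ\rho$ --- itself a statement about carrying --- but avoids any closed-form summation and exhibits the ``\dots999 $+$ 1 $=$ 0'' mechanism explicitly. Your index bookkeeping ($y_k=p-1$ iff $k\le -n$, and the substitution $j=-k$) is consistent with the paper's conventions, so the proof stands as written.
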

\begin{proof}
First, $\rho\circ \psi(p^{n})$ is the sequence say $a^{(n)}$, with only zeros except the term at rank $n$, equal to 1. Then using the fact that $\psi^{-1}\circ\rho$ is a homomorphism, we get
$$
\psi^{-1}\circ\rho\left(x^{(n)}\right)+p^{n} = \psi^{-1}\left(\rho\left(x^{(n)}\right)+\rho\left(a^{(n)}\right)\right).
$$
Now the previous $p$-adic sum also equals $\psi^{-1}\left(\rho\left(x^{(n+1)}\right)+\rho\left(a^{(n+1)}\right)\right)$, and so equals $\psi^{-1}\left(\rho\left(x^{(k)}\right)+\rho\left(a^{(k)}\right)\right)$ for any $k\ge n$, whose $p$-adic distance to 0 vanishes as $k\to\infty$.
 \end{proof} 
\noindent
For the following examples, we fix $p=3$ and we refer to Figure \ref{fig:triadicfield}.

Let $q=1=1p^0$, so that $\rho\circ\psi(q)=\psi(q)=(\ldots 0,0,0;1,0\ldots)$ and $\chi(q)$ is equal to $(1,r)$, the right face of 1. To know which element of $\Q_p$ is mapped to the left face of $1$ by $\chi$, we look at $\chi^{-1}((1,l))$. First, $\bar{\phi}((1,l)) =\phi_l(1)= (\ldots 0;0,2,2,2\ldots)$, so by the previous lemma, the image by $\psi^{-1}\circ\rho$ of this sequence is $-3$, that is, $\chi^{-1}((1,l))=-3$.

Let $q=6=2p^1$, so that $\psi(q)=(\ldots 0,0,2;0,0\ldots)$ and $\rho\circ\psi(q)=(\ldots 0,0;0,2,0\ldots)$, so that $\chi(q)=(2/3,r)$, the right face of $2/3$. Let us consider $\chi^{-1}((2/3,l))$. First, $\bar{\phi}((2/3,l)) =\phi_l(2/3)= (\ldots 0;0,1,2,2\ldots)$, whose image by $\psi^{-1}\circ\rho$ is $\psi^{-1}(\ldots0,0, 1;0,0\ldots) +\psi^{-1}\circ\rho (\ldots 0;0,0,2,2\ldots)=3-3^2=-6$.

We just saw that $\chi^{-1}((1,l))=-3$ whereas $\chi^{-1}((1,r))=1$, and that $\chi^{-1}((2/3,l))=-6$ whereas $\chi^{-1}((2/3,r))=6$. Actually, it is a simple exercise to show that for any $p$-adic number $q$ such that $F_p(\chi(q))=p^{-n}$, we have
$$
\chi^{-1}((q,l)) - \chi^{-1}((q,r)) = -(p+1)p^n,
$$
which gives the difference between the images of each face of the same point. Check this on Fig \ref{fig:triadicfield}.

\section{Spheres of real trees}
\label{sec:spheres}

\subsection{Preliminaries on real trees}

A \emph{real tree} is a complete, path-connected metric space $(\tr, d)$ satisfying the so-called `four points condition': for any $x_1, x_2, x_3, x_4\in\tr$
\begin{equation}
\label{eqn:4 points}
d(x_1,x_2)+d(x_3,x_4)\le \max\left[d(x_1,x_3)+d(x_2,x_4), d(x_1,x_4)+d(x_2,x_3)\right] .
\end{equation}
The root $\rho$ is a distinguished element of $\tr$. Real trees have been studied extensively, see e.g.  \citet{DMT96, EvaBook}. For example, it is well-known that real trees have unique geodesics and are endowed with a partial order $\preceq$. For any $x,y\in\tr$, $x\preceq y$ iff $x$ belongs to the geodesic between $\rho$ and $y$. \\

Also, real maps can be used to define trees. More specifically, let $h:[0,\infty)\to[0,\infty)$ be càdlàg (right-continuous with left-hand limits), with no negative jump and with compact support. Set $\sigma_h:=\sup\{t>0: h(t)\not=0\}$, as well as
$$
m_h(s,t):=\inf_{[s\wedge t, s\vee t]}h\qquad s,t\ge 0,
$$
and
$$
d_h(s,t):=h(s)+h(t) -2m_h(s,t).
$$
It is clear that $d_h$ is a pseudo-distance on $[0,\infty)$. Further let $\sim_h$ denote the equivalence relation on $[0,\infty)$
$$
s\sim_h t \Leftrightarrow d_h(s,t)=0 \Leftrightarrow h(s)=h(t)=m_h(s,t).
$$
If $\tr_h$ denotes the quotient space $[0,\sigma_h]|_{\sim_h}$, then it is well-known that $(\tr_h,d_h)$ is a compact real tree.
Let $p_h:[0,\sigma_h]\to \tr_h$ map any element of $[0,\sigma_h]$ to its equivalence class relative to $\sim_h$. We can also endow $\tr_h$ with a total order and a mass measure, as follows. 
\begin{itemize}
\item Total order. We define $\le_h$ as the order of first visits, that is for any $x,y\in\tr_h$,
$$
x\le_h y \Leftrightarrow \inf p_h^{-1}(\{x\})\le \inf p_h^{-1}(\{y\}).
$$
\item Mass measure. The measure $\mu_h$ is defined as the push forward of Lebesgue measure by $p_h$.
\end{itemize}
\noindent
Conversely, let $\tr$ be a compact, real tree endowed with a total order $\le$ and a finite measure $\mu$.  \cite{LUB15} have provided some simple conditions on $\le$ and $\mu$ under which $\tr$ can be viewed as a tree coded by some real map. First, the order $\le$ is assumed to satisfy the following two conditions. For any $x,y\in\tr$,
\begin{itemize}
\item
If $x\preceq y$ then $y\le x$
\item If $x\le y$ then for all $z\in\tr$ 
$$
z\preceq x \mbox{ and } z\not\preceq y\Rightarrow z\le y.
$$
\end{itemize}
Second, the measure $\mu$ is assumed to have no atom and to charge all open subsets. Then the map $\varphi:\tr \to[0,\mu(\tr)]$ defined by $\varphi(x)=\mu(\{y\in\tr:y\le x\})$ is one-to-one and its image is dense in $[0,\mu(\tr)]$. Its inverse has a unique càdlàg extension, denoted $\phi:[0,\mu(\tr)]\to\tr$. Furthermore, the map $h:[0,\mu(\tr)]\to[0,\infty)$ defined by $h(s) = d(\rho,\phi(s))$ is càdlàg with no negative jump and the tree $(\tr, \le, \mu)$ is isometric to $(\tr_h, \le_h, \mu_h)$. The map $h$ is called the \emph{jumping contour process} of $\tr$.\\

From now on, we let $\tr$ denote a compact real tree with total order $\le$ and finite measure $\mu$ satisfying the previous conditions.
We fix a real number $T>0$, and we let \emph{\bf $h$ denote the  jumping contour process of the tree $\tr$ truncated at height $T$}, which is the closed ball with center $\rho$ and radius $T$. We denote by $\trT$ the sphere with center $\rho$ and radius $T>0$  
$$
\trT:= \{x\in\tr : d(\rho,x)=T\},
$$
which we assume to be nonempty.
The so-called \emph{reduced tree} at height $T$ is the tree spanned by the sphere of radius $T$, that is 
$$
\{y\in \tr: \exists x\in \trT, y\preceq x\} ,
$$
which is the union of all geodesics linking the root to a point of the sphere. 

The reduced tree is also called \emph{reconstructed tree} in phylogenetics and \emph{coalescent tree} in population genetics. In these fields, the sphere and the reduced tree play an important role, since the former represents the set of individuals or species living at the same time and the latter represents their genealogical history (see \citealt{Lam16}). Actually, the induced metric on the sphere of radius $T$ contains all the information about the ancestral relationships between them. We will now try to understand how this metric can be represented by a comb.
Indeed, note that by the four-points condition, for any $x,y,z\in\trT$,
\begin{multline*}
T+d(x,z)=\\d(\rho,y)+d(x,z)\le \max\left[d(\rho,x)+d(y,z), d(\rho,z)+d(y,x)\right]\\ =\max \left[T+d(y,z), T+d(y,x)\right],
\end{multline*}
which yields $d(x,z)\le \max \left[d(y,z), d(y,x)\right]$, so that the metric induced by $d$ on $\trT$ is ultrametric.

\subsection{Sphere of a real tree}

The sphere $\trT$ is a compact ultrametric space so Theorem \ref{thm:second sense} ensures that provided it has no isolated point, it is isometric to a comb metric space. Instead of relying on this theorem, we prefer to give a direct construction of this comb by taking advantage of the order on $\trT$ inherited from the order $\le$ on $\tr$.\\

In the case when $\trT$ has only isolated points, it is not difficult to understand its metric structure. For example, assume that $\trT$ has finite cardinality $N_T\ge 2$. Let $x_1\le \cdots \le x_{N_T}$ denote its elements labelled in the order $\le$. Then for any $1\le i <j\le N_T$, writing $s_i:=\inf p_h^{-1}(\{x_i\})$ (each set $p_h^{-1}(\{x\})$ is actually a singleton 
in the case when the sphere is finite),
$$
d(x_i,x_{j})= h(s_i) + h(s_{j})- 2 \inf_{[s_i,s_{j}]}h = 2(T- \inf_{[s_i,s_{j}]}h)= 2\max(h_i,\ldots, h_{j-1}),
$$
 where
$$
h_i:=T- \inf_{[s_i,s_{i+1}]}h.
$$
In conclusion, the metric on $\trT$ is isomorphic to the comb metric $\dtf$ on $\dot I$, where $I=[1,N_T]$ and $f:=2\sum_{i=1}^{N_T-1}h_i\indicbis{i}$. We will now extend this description to spheres with no isolated points, but we first need the following lemma.

\begin{lem}
Assume that $\trT$ has no isolated point. Then $\{h=T\}$ has no isolated point and empty interior. 
\end{lem}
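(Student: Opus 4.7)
The plan is to prove both assertions by contradiction, working directly from the definition of the jumping contour process. Two facts will do the work: (i) by construction $\mu$ is the pushforward of Lebesgue measure under $p_h$, and by hypothesis it has no atoms, so no single $\sim_h$-equivalence class can have positive Lebesgue measure; (ii) on the sphere $\trT$, where $h(s)=h(t)=T$, the tree distance reduces to
$$
d_h(s,t) \;=\; h(s)+h(t)-2m_h(s,t) \;=\; 2T - 2\inf_{[s\wedge t,\, s\vee t]} h.
$$
Also note that $h$ takes values in $[0,T]$, since it is the contour of the tree truncated at height $T$.

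For the empty interior assertion, suppose on the contrary that $\{h=T\}\supseteq(a,b)$. Since $h\le T$ everywhere and $h\equiv T$ on $(a,b)$, for any $s,t\in(a,b)$ we have $m_h(s,t)=T=h(s)=h(t)$, so $s\sim_h t$. Hence $p_h((a,b))$ reduces to a single point $x\in\trT$, and therefore
$$
\mu(\{x\}) \;\ge\; b-a \;>\; 0,
$$
contradicting atomlessness of $\mu$. Note that this first part does not even use the no-isolated-points hypothesis on $\trT$.

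For the no-isolated-points assertion, suppose some $s^{*}\in\{h=T\}$ is isolated. Then there is $\delta>0$ such that $h<T$ on $(s^{*}-\delta,s^{*}+\delta)\setminus\{s^{*}\}$ (intersected with the domain of $h$). The key move is to choose specific witnesses $s^{*}\pm\delta/2$: both satisfy $h(s^{*}\pm\delta/2)<T$, and since any $s'\in\{h=T\}\setminus\{s^{*}\}$ satisfies $|s'-s^{*}|\ge\delta$, the closed interval from $s^{*}$ to $s'$ contains at least one of these witnesses. This yields the uniform lower bound
$$
d_h(s^{*},s') \;=\; 2T-2\inf_{[s^{*}\wedge s',\,s^{*}\vee s']}h \;\ge\; 2\Bigl(T-\max\bigl(h(s^{*}-\delta/2),\,h(s^{*}+\delta/2)\bigr)\Bigr)\;>\;0,
$$
valid for all such $s'$. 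So $x:=p_h(s^{*})\in\trT$ is an isolated point of $\trT$, contradicting the hypothesis.

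The step I expect to need the most care is this last inequality: one must check that the bound really is uniform in $s'$ (that is why one fixes $s^{*}\pm\delta/2$ rather than letting the witness slide with $s'$), and one should handle the two boundary cases $s^{*}=0$ and $s^{*}=\sigma_h$ by using only the available side. Everything else is routine from the contour-process setup.
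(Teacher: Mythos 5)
Your proposal is correct and follows essentially the same route as the paper: the empty-interior claim via atomlessness of the mass measure applied to the single equivalence class $p_h((a,b))$, and the no-isolated-point claim via a uniform positive lower bound on $d_h(s^*,s')$ coming from the fact that $h<T$ strictly on a punctured neighbourhood of an isolated point of $\{h=T\}$ (the paper extracts this bound from $\inf h$ over the intervals to the nearest neighbours in $\{h=T\}$, you from two fixed witnesses $s^*\pm\delta/2$ --- an immaterial difference). Both arguments also rely, as you implicitly do, on the fact that every point of $\trT$ is of the form $p_h(s)$ with $s\in\{h=T\}$.
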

\begin{proof}
Assume that $t$ is an isolated point of $\{h=T\}$. Set $G:=\sup\{s<t:h(s)=T\}$ and $D:=\inf\{s>t:h(s)=T\}$. By assumption, $G<t<D$ and since $h$ is càdlàg, $h(G-) =T$ and $h(D)=T$. Set $\gamma:=\inf_{[G,t]} h$ and $\delta:=\inf_{[t, D]} h$. Recall that $h$ is the contour process of the tree truncated below $T$, so that $h$ only takes values in $[0,T]$. This implies that $\gamma<T$ and $\delta<T$. So for any $s$ such that $h(s)=T$, $d_h(s,t)>T-(\gamma\vee\delta)$, so that $p_h(t)$ is an element of $\trT$ which is at distance at least $T-(\gamma\vee\delta)$ of all other points of $\trT$ (which all are of the form $p_h(s)$ for $s\in\{h=T\}$). This contradicts the assumption that $\trT$ has no isolated point. 

Now if $\{h=T\}$ had nonempty interior, there would be an open interval $(s,t)$ such that $h(u)=T$ for all $u\in(s,t)$. Then for any $u,v\in(s,t)$, $p_h(u)=p_h(v)=:x\in \trT$, so that $\mu(\{x\})\ge t-s>0$, which contradicts the fact that $\mu$ has no atom.
\end{proof}
\noindent
Thanks to the previous result, $\{h=T\}$ is perfect, so we can construct a local time at level $T$ for $h$, that is a nondecreasing, continuous map $L:[0,\infty)\to [0,\infty)$ such that $L(0)=0$ and for any $s<t$
$$
L_t>L_s \Leftrightarrow (s,t)\cap\{h=T\}\not=\varnothing.
$$
The reader who would like to see a detailed construction of this map is deferred to Section \ref{sec:local time} in the appendix.

Let $I=[0,L_{\sigma_h}]$, and set
$$
\dottrT:= \{x\in\trT : \exists (x_n^+)\uparrow, (x_n^-) \downarrow\in\trT, \lim_n\uparrow x_n^+=\lim_n\downarrow x_n^-=x\},
$$
where the sequences in the previous definition are requested to be strictly monotonic.
\begin{thm} 
\label{thm:comb-reducedtree}
Assume as previously that $\tr$ is a compact real tree such that $\trT$ is not empty and has no isolated point. Then there is a comb-like function $f$ on $I$ and two global isometries $\dot\theta:(\dot I, \dtf)\to(\dottrT,d)$ and $\bar\theta:(\bar I, \dtf)\to(\trT,d)$ preserving the order and mapping the Lebesgue measure to the push forward  $\mu^T$ of the Lebesgue--Stieltjes measure $dL$ by $p_h$.
\end{thm}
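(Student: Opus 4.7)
The plan is to mimic the finite-sphere computation carried out just above the statement of the theorem, with the local time $L$ at level $T$ playing the role of a continuous index on $I=[0,L_{\sigma_h}]$ and twice the depths of the excursions of $h$ below $T$ playing the role of the heights $h_i$. Concretely, the complement of $\{h=T\}$ in $[0,\sigma_h]$ splits into countably many maximal open excursion intervals $(g,d)$, on each of which $L$ is constant equal to some value $\ell:=L_g=L_d$; define
\[
f(\ell):=2\bigl(T-\inf\nolimits_{[g,d]}h\bigr)
\]
for such $\ell$, and $f:=0$ elsewhere on $I$. To verify that $f$ is a comb, I would note that if $(g_1,d_1),\ldots,(g_N,d_N)$ are distinct excursions of depth at least $\varepsilon/2$ ordered so that $d_i\le g_{i+1}$, then the formula $d_h(g_i,g_j)=2(T-\inf_{[g_i,g_j]}h)$ combined with $(g_i,d_i)\subset[g_i,g_j]$ yields $d_h(g_i,g_j)\ge\varepsilon$ for $i<j$, so that $\{p_h(g_i)\}$ is an $\varepsilon$-separated subset of the compact space $\trT$ and hence must be finite.

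\textbf{The isometry and its behaviour on faces.}
By the preceding lemma, $\{h=T\}$ has empty interior, hence for distinct $s_1<s_2$ in $\{h=T\}$ one has $d_h(s_1,s_2)=2(T-\inf_{[s_1,s_2]}h)>0$, so $p_h$ is injective on $\{h=T\}$. The defining property of $L$ also shows that $L|_{\{h=T\}}$ is a continuous surjection onto $I$, injective except that each excursion-endpoint pair $\{g,d\}$ is collapsed to the single value $L_g=L_d$. For $\ell\in\dot I$, denote by $s(\ell)$ the unique point of $\{h=T\}$ with $L_{s(\ell)}=\ell$ and set $\dot\theta(\ell):=p_h(s(\ell))$; for each $\ell$ with $f(\ell)>0$, associated with an excursion $(g,d)$, set $\bar\theta((\ell,l)):=p_h(g)$ and $\bar\theta((\ell,r)):=p_h(d)$. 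The core isometry identity is that for $s_1<s_2$ in $\{h=T\}$,
\[
\dtf(L_{s_1},L_{s_2})=\sup\nolimits_{(L_{s_1},L_{s_2})}f=2\bigl(T-\inf\nolimits_{[s_1,s_2]}h\bigr)=d_h(s_1,s_2),
\]
the middle equality holding because $h=T$ off the excursions, so that the infimum on $[s_1,s_2]$ is attained inside the deepest excursion contained in $(s_1,s_2)$; the four face identities are verified by the same argument combined with the face formulas for $\dtf$ listed before Proposition \ref{prop:first sense}.

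\textbf{Extension, surjectivity, order and measure.}
Since $\dot I$ is dense in $\bar I$ and $\dot\theta$ is isometric, it extends uniquely by continuity to $\bar\theta:\bar I\to\trT$, and the limit computations at the end of the proof of Proposition \ref{prop:first sense} show that this extension agrees on faces with the pointwise definition above. The order is preserved because $s_1<s_2$ in $\{h=T\}$ implies $p_h(s_1)\le_h p_h(s_2)$ by the first-visit definition of $\le_h$, and because the left (resp.\ right) face of a spike is sent to the earlier endpoint $g$ (resp.\ later endpoint $d$) of the corresponding excursion. Surjectivity onto $\trT$ holds since every $x\in\trT$ is $p_h(s)$ for some $s\in\{h=T\}$, which is either $s(L_s)$ or an excursion endpoint; and the inclusion $\dot\theta(\dot I)\subset\dottrT$ follows because every $\ell\in\dot I$ is two-sidedly approached in $\dot I$ and $\dot\theta$ is order-preserving. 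Finally, the measure identity $\bar\theta_\ast\,\mathrm{Leb}=\mu^T$ reduces to the fact that $dL$ is carried by $\{h=T\}$ and that $L|_{\{h=T\}}$ pushes $dL$ to Lebesgue measure on $I$ up to the countable set of excursion local times. The main technical obstacle of this plan is the rigorous construction of the local time $L$ with the properties used throughout, which is deferred to Section \ref{sec:local time} of the appendix.
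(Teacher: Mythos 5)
Your construction is essentially the paper's: the same comb (twice the excursion depths of $h$ below $T$, indexed by the local time at $T$), the same maps $\dot\theta$ and $\bar\theta$ (the two faces of a spike going to $p_h$ of the two endpoints of the corresponding excursion), and the same isometry computation reducing $\dtf$ to $2(T-\inf h)$ over the relevant interval; your verification of the comb property via $\varepsilon$-separated subsets of the compact space $\trT$ is if anything more solid than the paper's appeal to the finiteness of the jumps of $J$ larger than $\varepsilon$, which controls excursion lengths rather than depths. The one point you leave unaddressed is the surjectivity of $\dot\theta$ onto $\dottrT$: you prove the inclusion $\dot\theta(\dot I)\subseteq\dottrT$ and the surjectivity of $\bar\theta$ onto $\trT$, but not the converse inclusion $\dottrT\subseteq\dot\theta(\dot I)$, which the paper obtains by pulling the two strictly monotone approximating sequences of $x\in\dottrT$ back to $\{h=T\}$ (using uniqueness of $p_h$-preimages), concluding that the preimage $t$ of $x$ is a two-sided point of increase of $L$, hence $J_s=J_{s-}=t$ and $f(s)=0$.
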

\begin{rem} Note that we could have equally assumed that $\tr$ is only locally compact, since then its closed balls are compact by the Hopf-Rinow theorem. 
\end{rem}
\begin{proof}
Let $J$ denote the right inverse of the local time $L$
$$
J_t := \inf\{s>0:L_s >t\}\qquad t\in I.
$$
Every jump time $s$ of $J$ corresponds to an interval $(g_s,d_s):=(J_{s-}, J_s)$ where $L$ is constant (to $s$) such that $h\not=T$ on $(g_s,d_s)$ and $h(g_s-)=h(g_s)= T = h(d_s)$ (because $h$ has no negative jump). Note that $J$ is càdlàg so that the number of jumps with size larger than $\varepsilon$ is finite. Now for any $s\in I$, set
$$
f(s):=
\left\{
\begin{array}{cl}
2\,(T-\inf_{[g_s, d_s]}h)&\text{ if }\Delta J_s \not=0\\
0 &\text{ otherwise,}
\end{array}
\right.
$$
which is a comb on $I$, so we can consider $(\dot I, \dtf)$ and $(\bar I, \dtf)$ the associated ultrametric spaces. 
Note that $f(s)=0$ implies that $\Delta J_s =0$ otherwise $h$ would be constant to $T$ on $[g_s,d_s]$ (and so the mass measure would have an atom).  

Further define
\Fonction{\dot\theta}{(\dot I,\dtf)}{(\dottrT, d)}{s}{p_h(J_s)}
as well as
\Fonction{\bar\theta}{(\bar I,\dtf)}{(\trT, d)}{s'}{
\left\{
\begin{array}{cl}
p_h(J_{s-})& \text{ if $s'=(s,l)$ and $f(s)\not=0$}\\
 p_h(J_s)& \text{ otherwise.} 
\end{array}
\right.
}
\noindent
Let us first prove that $\dot\theta$ and $\bar\theta$ do take their values in $\dottrT$ and $\trT$ respectively.

Let $s\in\dot I$, so that $f(s)=0$ and $\Delta J_s =0$, as noted earlier. So $t:=J_s$ is a point of increase of $L$. Because $L$ is continuous and nondecreasing, and increases only on $\{h=T\}$, $t$ is both a left and a right limit point of $\{h=T\}$.  So there is an increasing sequence $(t_n^+)$ and a decreasing sequence  $(t_n^-)$ of $\{h=T\}$ both converging to $t$. Since $h$ is càdlàg, $\inf_{[t_n^+, t_n^-]}h$ converges to $T$, so that $p_h(t_n^+)$ and $p_h(t_n^-)$ are resp. increasing and decreasing sequences of $\trT$ both converging to $p_h(t)=p_h(J_s)= \dot\theta (s)$. This shows that $\dot\theta(s)\in\dottrT$.

The case of $\bar\theta$ is simpler, since $h(g_s)= h(d_s)=T$ for all $s$, so that $p_h(J_{s-})$ and $p_h(J_s)$ are always elements of $\trT$.\\
\\
Let us now prove that both $\dot\theta$ and $\bar\theta$ are surjective. In both cases, we will use the fact that for any $x\in\trT$, there is a unique $t$ such that $p_h(t)=x$ (otherwise $h$ would be constant to $T$ on some open interval). And since $h(t)=T$, there is $s$ such that $t=J_s$ or $t=J_{s-}$.

Let us first treat the case of $\bar\theta$. If $f(s)=0$ then $J_s=J_{s-}$ and $x=p_h(J_s)=\bar\theta(s)$. If $f(s)\not=0$, then either $t=J_s$ and $x=p_h(J_s)=\bar\theta((s,r))$ or $t=J_{s-}$ and $x=p_h(J_{s-})=\bar\theta((s,l))$. So $\bar\theta$ is surjective.

Now let $x\in \dottrT$ and let $(x_n^-)$ and $(x_n^+)$ be resp. decreasing and increasing sequences of $\trT$ converging to $x$. Then for each $n$ there is a unique pair $(t_n^-,t_n^+)$ such that $p_h(t_n^-) =x_n^-$ and $p_h(t_n^+) =x_n^+$. So $(t_n^-)$ and $(t_n^+)$ are resp. decreasing and increasing sequences of $\{h=T\}$, and both converge to $t$ (otherwise $h$ would be constant to $T$ on some open interval). So $t$ is a point of increase of $L$ and $t=J_s = J_{s-}$. This implies that $f(s)=0$ and $x=p_h(t) = p_h(J_s) = \dot\theta(s)$. So $\dot\theta$ also is surjective.\\
\\
Next, we prove that  $\dot\theta$ and $\bar\theta$ are isometric. For any $s<t\in \dot I$,
\begin{eqnarray*}
d( \dot\theta(s),\dot\theta(t)) &=& d(p_h(J_s), p_h(J_t)) \\
	&=& h(J_s) + h(J_t)- 2 \inf_{[J_s, J_t]}h\\
		&= & 2T - 2\inf_{[J_s, J_t]}h = 2\sup_{[J_s, J_t]}(T-h)\\
		&=& 2\max\left\{	\sup_{[J_s, J_t]\setminus \cup_u [g_u,d_u]}(T-h), \sup_{ u\in[s,t] }\sup_{[g_u,d_u]}(T-h)	\right\}\\
		&=& 2\sup_{ u\in[s,t] }\sup_{[g_u,d_u]}(T-h) =2\sup_{u\in[s,t]}\left(T-\inf_{[g_u,d_u]}h\right)\\
		&=& \sup_{u\in[s,t]}f(u)= \dtf(s,t).
\end{eqnarray*}
Now for any $s'<t'\in\bar I$, according to whether $s'=(s,l)$ or $s'=(s,r)$,  $t'=(t,l)$ or $t'=(t,r)$,
\begin{eqnarray*}
d( \bar\theta(s'),\bar\theta(t')) &=& d(p_h(J_{s\pm}), p_h(J_{t\pm})) \\
	&=&2\max\left\{	\sup_{[g_s,d_s]}(T-h)\indic{s'=(s,l)}, \sup_{ u\in(s,t) }\sup_{[g_u,d_u]}(T-h),\sup_{[g_t,d_t]}(T-h)\indic{t'=(t,r)} 	\right\}\\
	&=&\max\left\{	f(s)\indic{s'=(s,l)}, \sup_{ u\in(s,t) }f(u),f(t)\indic{t'=(t,r)} 	\right\}\\
		&=&  \dtf(s',t').
\end{eqnarray*}
\noindent The fact that $\dot\theta$ and $\bar\theta$ preserve the order is obvious. As for the measure, for any $s<t\in \dot I$
$$
\mu^T\left(\dot\theta([s,t])\right) = dL\left(p_h^{-1}\left(\dot\theta([s,t])\right)\right) = dL\left(\left[J_s,J_t\right]\right)=t-s,
$$
which proves that the push forward of the Lebesgue measure by $\dot\theta$ is $\mu^T$. 
\end{proof}


\subsection{Coalescent point processes}

As we just saw in the proof of Theorem \ref{thm:comb-reducedtree}, the comb representation of the sphere of radius $T$ of a locally compact tree is given by the list of the depths of the excursions of $h$ away from $T$, where $h$ is the jumping contour process of the ball of radius $T$ (i.e., of the tree truncated at height $T$). More specifically, if $L$ is the local time at $T$ of $h$ and $J$ its right inverse
$$
J_t := \inf\{s>0:L_s >t\}, 
$$
then every jump time $s$ of $J$ corresponds to an interval $(g_s,d_s):=(J_{s-}, J_s)$ of constancy of $L$ such that $h\not=T$ on $(g_s,d_s)$ and $h(g_s-)=h(g_s)= T = h(d_s)$. The comb representation of the metric structure of the sphere of radius $T$ is given by the comb $f$ defined by 
$$
f(s):=
\left\{
\begin{array}{cl}
2\,(T-\inf_{[g_s, d_s]}h)&\text{ if }\Delta J_s \not=0\\
0 &\text{ otherwise.}
\end{array}
\right.
$$ 
\begin{dfn}
Let $\nu$ be a $\sigma$-finite measure on $(0,\infty)$ such that $\nu([\varepsilon,\infty))<\infty$ for all $\varepsilon>0$. Let $\mc$ be a Poisson point process on $(0,\infty)^2$ with intensity $\mbox{Leb}\,\otimes\,\nu$ and denote by $(S_i, H_i)_i$ its atoms. Finally, let $(D,H)$ denote the first atom (in the first dimension) such that $H>T$.
We will say that the random comb metric space associated with the comb $\sum_{i:S_i<D}2H_i\indicbis{S_i}$ is a \emph{coalescent point process} with height $T$ and intensity $\nu$.
\end{dfn}
\noindent
For a random $h$ which is a strong Markov process for which $T$ is a regular point, there is an alternative way to the one given in the appendix of constructing the local time $L$ at $T$, which ensures that it is adapted and so unique up to a multiplicative constant. Then if $J$ denotes the inverse of $L$, to each jump $\Delta J_s$ of $J$ corresponds an excursion of $h$ away from $T$ with length $\Delta J_s$, say $e_s$. Furthermore, $((s, e_s);s\ge 0)$ are the atoms of a Poisson point process in $(0,\infty)\times \Ec$, where $\Ec$ is the space of càdlàg paths with finite lifetime $V$ visiting $T$ at most at 0 and at $V$. The intensity measure, say $\mu$, of this Poisson point process is called Itô's excursion measure and is the analogue to the common probability distribution of excursions when the visit times of T by $X$ form a discrete set. 

Recall the comb $f$ before the definition. A consequence of what precedes is that when $h$ is a strong Markov process $((s,f(s));f(s)\not=0)$ are the atoms of a Poisson point process in $(0,\infty)^2$ whose intensity is $\mbox{Leb}\,\otimes\,\nu$, where $\nu$ is the push forward of $\mu$ by the map $e\mapsto 2(T-\inf_{[0, V]} e)$. 
A particular example of such a coalescent point process is given in the case of the Continuum Random Tree \citep{Ald91}, which we (abusively) define here as the tree coded by the Brownian excursion conditioned to hit $T$.
\begin{thm}[\citealt{Pop04, AP05}]
The sphere $\trT$ of the Brownian tree $\tr$ is a coalescent point process with height $T$ and intensity measure $\nu$, where 
\begin{equation}
\label{eqn:formula_0}
\nu(dh) = \frac{dh}{2 h^2} 
\end{equation}
\end{thm}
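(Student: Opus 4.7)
The plan is to apply the Poisson-point-process interpretation of the comb $f$ developed in the paragraph preceding the statement. Once we identify the jumping contour process $h$ of the ball of radius $T$ in the Brownian tree $\tr$ and check that it is a strong Markov process for which $T$ is a regular point, Theorem~\ref{thm:comb-reducedtree} combined with It\^o's excursion theory will imply that $\trT$ is a coalescent point process with height $T$ and intensity $\nu$, where $\nu$ is the image of the It\^o excursion measure at level $T$ under the map $e \mapsto 2(T - \inf_{[0,V]} e)$. It then remains to compute this push-forward explicitly.

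First I would identify $h$. Since $\tr$ is coded by a Brownian excursion $\tilde e$ conditioned to reach height $T$, the truncation of $\tr$ at height $T$ is coded by $h = \tilde e \wedge T$; in particular, the excursions of $h$ away from $T$ are in bijection with the excursions of $\tilde e$ strictly below $T$, and one can take for $L$ the Brownian local time of $\tilde e$ at $T$. This provides the strong Markov regenerative structure at $T$ needed to apply the Poisson point process statement, and by the classical It\^o excursion theorem the excursions of $h$ away from $T$, read in local-time order, form a Poisson point process of intensity $\mathrm{Leb}\otimes n_T$, where $n_T$ denotes the It\^o excursion measure of Brownian motion away from $T$.

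Next I would compute the push-forward. By the well-known formula for the tail of an excursion extremum of Brownian motion, the depth $a := T - \inf_{[0,V]} e$ has distribution proportional to $da/a^2$ under $n_T$; the substitution $h = 2a$ then gives $\nu(dh) = c\, dh/h^2$, and choosing the normalization of $L$ consistently with the It\^o--Tanaka normalization of Brownian local time and the factor $2$ appearing in Aldous' encoding of the Brownian tree fixes $c = 1/2$, which is exactly \eqref{eqn:formula_0}.

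The main obstacle is this normalization bookkeeping: two independent conventions, namely the factor $2$ already built into the definition of $f$ in Theorem~\ref{thm:comb-reducedtree} and the constant implicit in the It\^o excursion measure at $T$, have to be matched so that the announced constant $1/2$ emerges. Once this is done, the remainder of the argument reduces to assembling Theorem~\ref{thm:comb-reducedtree}, the strong Markov property of the Brownian excursion at $T$ together with the regularity of $T$, and the classical depth formula for one-sided Brownian excursions.
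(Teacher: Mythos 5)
The paper itself offers no proof of this statement---it is imported from \citet{Pop04} and \citet{AP05}---and the derivation it gestures at in the paragraph immediately preceding the theorem is precisely the one you propose: the jumping contour process $h$ of the ball of radius $T$ is a strong Markov process for which $T$ is regular, its excursions away from $T$ indexed by local time form a Poisson point process with intensity $\mbox{Leb}\otimes\mu$ for It\^o's excursion measure $\mu$, and the comb of Theorem \ref{thm:comb-reducedtree} is then a Poisson point process with intensity $\mbox{Leb}\otimes\nu$ where $\nu$ is the push-forward of $\mu$ by $e\mapsto 2(T-\inf_{[0,V]}e)$. So your route is the intended one, and the shape $\nu(dh)\propto dh/h^2$ is indeed forced by the linear scale function of Brownian motion. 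The one genuine gap is the constant: you defer $c=1/2$ to ``normalization bookkeeping'' without performing it, yet the constant is the entire nontrivial content of \eqref{eqn:formula_0}, and it really does depend on which local time at $T$ you choose---the paper only asserts uniqueness of $L$ up to a multiplicative constant, and the explicit construction in Section \ref{sec:local time} normalizes $L$ to total mass one, which is \emph{not} the It\^o--Tanaka convention you invoke. To complete the argument you must commit to a definite normalization (say, the occupation-density local time of the coding excursion at level $T$), record the corresponding excursion-measure tail $\mu(\sup_{[0,V]}(T-e)>a)$ for one-sided excursions below $T$ (which is $1/(2a)$ or $1/a$ depending on that choice), and then push forward through the factor $2$ in the definition of $f$; only after this computation does the value $1/2$, rather than $1$ or $2$, actually follow. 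As written, your proof establishes the theorem only up to an undetermined positive constant in \eqref{eqn:formula_0}.
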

\noindent
Another nice example is the case of splitting trees, which form a class of non-Markovian branching processes in continuous time. Specifically, all individuals are assumed to have i.i.d. lifetimes during which they give birth at constant rate to independent copies of themselves. \cite{Lam10} proved that the jumping contour process of the truncation below $T$ of a splitting tree is a compound Poisson process with drift $-1$ reflected below $T$ and killed upon hitting 0. Then it is easy to see that the sphere of radius $T$ has a simple comb representation, in the form of i.i.d. excursion depths.\\

\cite{LUB15} have extended this study to all trees satisfying the following property. Conditional on any geodesic starting at the root, the set $(x_i, \tr_i)$ of branching points on this geodesic and of the subtrees grafted at each of these points, form a Poisson point process with intensity $\mbox{Leb}\,\otimes\,\xi$, where $\xi$ is a $\sigma$-finite measure on the space of locally compact trees. Specifically, we have characterized such trees as those trees whose truncations have as jumping contour process a reflected Lévy process with no negative jump. Therefore, the metric structure of the spheres of such trees has again a representation in the form of a coalescent point process.\\

Last, \citet{LP13} have expressed the distribution of the coalescent point process for non-binary branching trees, including Galton--Watson processes and continuous-state branching processes, which have branching points of arbitrarily large degree. In this case, the comb is not a Poisson point process because of the multiple points, but it has a nice regenerative property.

\paragraph{Acknowledgements.} AL  thanks the {\em Center for Interdisciplinary Research in Biology} (Coll{\`e}ge de France) for funding. The research of GUB is supported by UNAM-DGAPA-PAPIIT grant no. IA101014.

\bibliographystyle{apalike}
\bibliography{FromZotero} 

\appendix

\section{Finite ultrametric sets}
\label{sec:finite-ultra}

Here we prove the statement made in the introduction that for any finite subset $S$ of an ultrametric space $(U,d)$, there is at least one labelling of its elements $\{x_i:1\le i\le n\}$ such that  for any $i<j$, 
\begin{equation}
\label{eqn:comb-finite}
d(x_i,x_{j}) = \max\{d(x_k,x_{k+1});i\le k<j\}. 
\end{equation}
Let us proceed by induction on $n$. For $n=2$, the statement is trivial, but it will be useful later to have initialized the induction at $n=3$. 

Let $x,y,z\in U$. Assume that there are two pairs with different distances (otherwise the statement is obvious), for example $d(y,z)<d(x,z)$. We are going to show that this implies the equality $d(x,z) = d(x,y)$. First, $d(x,y)\le d(x,z)\vee d(z,y) = d(x,z)$, so that
$$
d(x,y)\le d(x,z).
$$
Second, $d(y,z)<d(x,z)\le d(x,y)\vee d(y,z)$, which yields $d(y,z)<d(x,z)\le d(x,y)$, so that
$$
d(y,z)<d(x,z)= d(x,y).
$$ 
So we can define for example $x_1=z$, $x_2=y$ and $x_3=x$, for then $d(x_1, x_3) = \max(d(x_1, x_2), d(x_2, x_3))$.

Now let $n\ge 3$ and assume that the induction property holds for subsets of $U$ with cardinality $n$. Let $S$ be a finite subset of $U$ with cardinality $n+1$. Let $y,z\in S$ be the pair realizing the minimum distance, i.e. such that $d(y,z)\le d(v,w)$ for any $v,w\in S$. Apply the induction property to $S\setminus \{z\}$, which has cardinality $n$. Then there is a labelling $\{x_i':1\le i\le n\}$ of $S\setminus \{z\}$ such that  Eq \eqref{eqn:comb-finite} holds (replacing all $x_j$ by $x_j'$). Let $K$ be the label of $y$ (i.e., $y = x_K'$) and set
$$
x_j:=\left\{\begin{array}{ccl}
x_j'&\mbox{if}& j\le K\\
z&\mbox{if}& j= K+1\\
x_{j-1}'& &\mbox{else.}
\end{array}
\right.
$$
Now since $y,z$ realize the minimum distance, Eq \eqref{eqn:comb-finite} holds for any $i<j$ such that $i$ and $j$ are both different from $K+1$. It holds also when $i$ or $j$ is equal to $K+1$ by an application of the argument initializing the induction, which ensures that for any $\ell\not\in\{K,K+1\}$, $d(x_\ell, z) =d(x_\ell,y)$.

\section{Local time} 
\label{sec:local time}
If $A$ is a closed subset of $[0,\infty)$, a \emph{local time} associated to $A$ is a nondecreasing mapping $L:[0,\infty)\to[0,\infty)$ such that $L(0)=0$ and whose set of points of increase coincides with $A$. If $A$ is discrete, $L$ can be defined simply, for example as the counting process $L_t=\#[0,t]\cap A$. But then if $A$ is not discrete, the counting process will blow up at the first accumulation point of $A$, so a different strategy is needed.\\ 

Assume that $A$ is perfect, i.e., it has empty interior and no isolated point. For any compact interval, say $[0,M]$, we can construct a continuous mapping $L:[0,M]\to [0,1]$ such that $L(0)=0$, $L(M)=1$ and for any $0\le s<t\le M$
$$
L_t>L_s \Leftrightarrow (s,t)\cap A\not=\varnothing.
$$
The construction is recursive, exactly as for the Cantor--Lebesgue function, also known as `devil's staircase'. 

First recall that the open set $B:=(0,M)\setminus A$ can be written as a countable union of open intervals, say $(I_n)_{n\ge 1}$. For each $n\ge 1$, write $I_n=(g_n,d_n)$. Note that for any $\varepsilon>0$, there can be only finitely many of these intervals which have length larger than $\varepsilon$. Therefore, we can assume that the intervals $(I_n)$ are ranked by decreasing order of their lengths (since ties can only occur in finite numbers, we can rank them by increasing order of their left-hand extremities $g_n$'s, say). 

We are going to construct recursively, for each $n\ge 1$, a continuous mapping $L^n:[0,M]\to [0,1]$ which is piecewise affine and constant exactly on $\cup_{k=1}^n I_k$. First, $L^1$ is the function equal to $1/2$ on $I_1$, affine on $[0,g_1]$ and on $[d_1,1]$, such that $L^1(0)=0$ and $L^1(M)=1$. Now assume that we are given a continuous function $L^n:[0,M]\to [0,1]$ which is piecewise affine and constant exactly on $\cup_{k=1}^n I_k$. Writing $d_0=0$ and $g_0=1$, there is a unique pair $0\le k,j\le n$ such that $d_j<g_{n+1}<d_{n+1}<g_k$ minimizing $g_k-d_j$. Then we can define $L^{n+1}$ as the continuous function equal to $L^n$ outside $(d_k,g_j)$, constant to $\frac12(L^n(g_j) + L^n(d_k))$ on $[g_{n+1}, d_{n+1}]$, affine on $[d_j, g_{n+1}]$ and on $[d_{n+1}, g_k]$. It is easy to see that $L^{n+1}$ satisfies the announced properties. 

Now for any $p\in\N$, let $\Dc_p$ denote the set of dyadic numbers of $(0,1)$ whose dyadic expansion has length smaller than $p$, i.e., $\Dc_p=\{x\in (0,1): \exists (x_1,\ldots,x_p) \in\{0,1\}:x=\sum_{k=1}^px_k\,2^{-k}\}$. Also for $n\in \N$, let $\Jc_n$ denote the set of values taken by $L^n$ on its constancy intervals. Because $A$ has no isolated point, for each $p\ge 1$ there is an integer $N$ such that for all $n\ge N$, $\Dc_p\subset\Jc_n$, so that for any $n'\ge N$, $\| L^n-L^{n'}\|\le 2^{-p}$, where $\|\cdot\|$ is the supremum norm on $[0,1]$. This shows that $(L^n)$ is a Cauchy sequence for the supremum norm, and so converges uniformly on $[0,1]$ to a continuous function $L$. It is not difficult to see, using the stationarity of $(L^n)$ on $B$, that $L$ increases exactly on $A$.

\end{document}